    \renewcommand  {\phi}    { \varphi }
    \newcommand    {\ten}    { \otimes }
    \newcommand    {\Ten}    { \bigotimes }
    \newcommand    {\bq}   { \begin{equation} }
    \newcommand    {\eq}   { \end{equation} }
    \newcommand    {\ba}   { \begin{array} }
    \newcommand    {\ea}   { \end{array} }
    \newcommand    {\BbbN}  {\mathds{N}}
    \newcommand    {\N}     {\BbbN}
    \newcommand    {\BbbR}  {\mathds{R}}
    \newcommand    {\R}     {\BbbR}
    \newcommand    {\Id}    { \mbox{\bf Id} }
    \newcommand    {\Bild}  { \mbox{\rm Range$\,$} }
    \newcommand    {\Diag}  { \mbox{\rm diag$\,$} }
    \newcommand    {\Span}  { \mbox{\rm span} }
    \newcommand    {\argmin}{ \mbox{\rm argmin} }
\newcommand    {\dotprod}[2] { \left \langle #1, #2 \right \rangle }
\newcommand    {\ve}[1]
                {\underline{#1}}
\newcommand    {\dist}[2]
                {\mbox{\rm dist$\,$} \left ( #1, #2 \right ) }
\newcommand    {\Ap}
                {\mathcal{A}}
\newcommand    {\Mp}
                {\mathcal{M}}
\newcommand    {\Vp}
                {\mathcal{V}}
\newcommand{\BIGOP}[1]{\mathop{\mathchoice%
{\raise-0.22em\hbox{\huge $#1$}} {\raise-0.05em\hbox{\Large $#1$}}
{\hbox{\large $#1$}}{#1}}}
\newcommand{\BIGboxplus}{\mathop{\mathchoice%
{\raise-0.35em\hbox{\huge $\boxplus$}}%
{\raise-0.15em\hbox{\Large $\boxplus$}}{\hbox{\large
$\boxplus$}}{\boxplus}}}
\newcommand{\bigtimes}{\BIGOP{\times}}
\newtheorem{theorem}{Theorem}[section]
\newtheorem{remark}[theorem]{Remark}
\newtheorem{defn}[theorem]{Definition}
\newtheorem{note}[theorem]{Notation}
\newtheorem{example}[theorem]{Example}
\newtheorem{lemma}[theorem]{Lemma}
\newtheorem{corollary}[theorem]{Corollary}
\newtheorem{proposition}[theorem]{Proposition}
\newlength{\symboheight}
  \def\image#1#2{\epsfig{file=#1,width=#2}}
  \def\image#1#2{\epsfig{file=#1,width=#2}}
\begin{document}
\title{Convergence of Alternating Least Squares Optimisation for Rank-One Approximation to High Order Tensors}
\author{
  Mike Espig
     \thanks{RWTH Aachen University, Germany}
       \footnote{
%     Principal  author.
     Address: RWTH Aachen University, Department of Mathematics, IGPM
Templergraben 55, 52056 Aachen Germany.
     Phone: +49 (0)241 80 96343, E-mail address: mike.espig@alopax.de
   }
  \and
  Aram Khachatryan
  \footnotemark[1]
  }
\maketitle

\begin{abstract}

The approximation of tensors has important applications in various
disciplines, but it remains an extremely challenging task. It is
well known that tensors of higher order can fail to have best
low-rank approximations, but with an important  exception that best
rank-one approximations always exists. The most popular approach to
low-rank approximation is the alternating least squares (ALS)
method. The convergence of the alternating least squares algorithm
for the rank-one approximation problem is analysed in this paper. In
our analysis we are focusing on the global convergence and the rate
of convergence of the ALS algorithm. It is shown that the ALS method
can converge sublinearly, Q-linearly, and even Q-superlinearly. Our
theoretical results are illustrated on explicit examples.

\end{abstract}
{\bf Keywords}:  tensor format, tensor representation, alternating
least squares optimisation, orthogonal projection method.

{\bf MSC}:  15A69, 49M20, 65K05, 68W25, 90C26.

%\tableofcontents
%\listoftodos

\section{Introduction}\label{sec:introduction}
We consider a minimisation problem on the tensor space
$\Vp=\Ten_{\mu=1}^d \R^{n_\mu}$ equipped with the Euclidean inner
product $\dotprod{\cdot}{\cdot}$. The objective function $f: \Vp \rightarrow \R$
of the optimisation task is quadratic
\begin{equation}\label{equ:deff}
f(v):=\frac{1}{\|b\|^2}\left[\frac{1}{2} \dotprod{v}{v} -
\dotprod{b}{v} \right] \geq -\frac{1}{2},
\end{equation}
where $b \in \Vp$. In our analysis, a tensor $u\in \Vp$ is
represented as a rank-one tensor. The representation of rank-one
tensors is described by the following multilinear map $U$:
\begin{eqnarray*}
% \nonumber to remove numbering (before each equation)
  U : P:=\bigtimes_{\mu=1}^d \R^{n_\mu}&\rightarrow &  \Vp\\
  (p_1, \dots, p_d) &\mapsto& U(p_1,\dots, p_d):=\Ten_{\mu=1}^d
  p_\mu.
\end{eqnarray*}
We call a $d$-tuple of vectors $(p_1, \dots, p_d)\in P$ a
representation system of $u$ if $u=U(p_1, \dots, p_d)$.
The tensor $b$ is approximated with respect to rank-one tensors,
i.e. we are looking for a representation system $(p^*_1, \dots,
p^*_d)\in P$ such that for
\begin{eqnarray}\label{equ:defF}
    F&:=&f \circ U :P \rightarrow \Vp \rightarrow \R\\
    \nonumber
    F(p_1, \dots, p_d)&=&\frac{1}{\|b\|^2}\left[\frac{1}{2} \dotprod{U(p_1, \dots, p_d)}{U(p_1, \dots, p_d)} -
\dotprod{b}{U(p_1, \dots, p_d)} \right]
\end{eqnarray}
we have
\begin{equation}\label{eq:defMinProblem}
F(p^*_1, \dots, p^*_d) = \min_{(p_1, \dots, p_d) \in P} F(p_1,
\dots, p_d).
\end{equation}
The range set $U(P)$ is a closed in $\Vp$, see \cite{HA12}.
Therefore, the approximation problem is well defined. The set of
best rank-one approximations of the tensor $b$ is denoted by
\begin{equation}\label{equ:defMb}
    \Mp_b:=\left\{v \in U(P) \,: \, v \mbox{ is a best rank-one approximation of } b\right\}.
\end{equation}
The alternating least squares (ALS) algorithm \cite{BEMO02, BEMO05,
ESHAHARS11_2, HoltzALS2012, Kolda09tensordecompositions,
Oseledets2011, OseledetsDolgov2012} is recursively defined. Suppose
that the $k$-th iterate $\ve{p}^k=(p_1^k, \dots, p_d^k)$ and the
first $\mu-1$ components $p_1^{k+1}, \dots, p_{\mu-1}^{k+1}$ of the
$(k+1)$-th iterate $\ve{p}^{k+1}$ have been determined. The basic
step of the ALS algorithm is to compute the minimum norm solution
\begin{equation*}
    p_\mu^{k+1}:=\argmin_{q_\mu \in \R^{n_\mu}}F(p_1^{k+1}, \dots, p_{\mu-1}^{k+1}, q_\mu, p_{\mu+1}^{k}, \dots,
    p_{d}^{k}).
\end{equation*}
Thus, in order to obtain $\ve{p}^{k+1}$ from $\ve{p}^k$, we have to
solve successively $L$ ordinary least squares problems.
The ALS algorithm is a nonlinear Gauss-Seidel method. The locale
convergence of the nonlinear Gauss-Seidel method to a stationary
point $\ve{p}^* \in P$ follows from the convergence of the linear
Gauss-Seidel method applied to the Hessian $F''(\ve {p}^*)$ at the
limit point $\ve {p}^*$. If the linear Gauss-Seidel method converges
R-linear then there exists a neighbourhood $B(\ve {p}^*)$ of $\ve
{p}^*$ such that for every initial guess $\ve{p}^0 \in B(\ve {p}^*)$
the nonlinear Gauss-Seidel method converges R-linear with the same
rate as the linear Gauss-Seidel method. We refer the reader to
Ortega and Rheinboldt for a description of nonlinear Gauss-Seidel
method \cite[Section 7.4]{OR70} and convergence analysis \cite[Thm.
10.3.5, Thm. 10.3.4, and Thm. 10.1.3]{OR70}. A representation system
of a represented tensor is not unique, since the map $U$ is
multilinear. Consequently, the matrix $F''(\ve {p}^*)$ is not
positive definite. Therefore, convergence of the linear Gauss-Seidel
method is in general not ensured. However, the convergence of the
ALS method is discussed in \cite{Mohlenkamp2013, UschmajewALS2012,
Wang2014, ZhangGolub2001}. Recently, the convergence of the ALS
method was analysed by means of Lojasiewicz gradient inequality,
please see \cite{Uschmajew:2014} for more details. The current
analysis is not based on the mathematical techniques developed for
the nonlinear Gauss-Seidel method neither on the theory of
Lojasiewicz inequalities, but on the multilinearity of the map $U$.

\begin{note}[$\N_n$]
The set $\N_n$ of natural numbers smaller than $n \in \N$ is denoted
by
\begin{equation*}
    \N_n:=\{j \in \N : 1\leq j \leq n\}.
\end{equation*}
\end{note}

The precise analysis of the ALS method is a quite challenging task.
Some of the difficulties of the theoretical understanding are
explained in the following examples.
\begin{example}\label{exa:mike}
The approximation of $b \in \Vp$ by a tensor of rank one is
considered, where
\begin{eqnarray}\label{eq:defbIntroEx}
% \nonumber to remove numbering (before each equation)
  b&=& \sum_{j=1}^r \underbrace{\lambda_j \Ten_{\mu=1}^d b_{j \mu}}_{b_j:=}, \quad \lambda_1 \geq \dots \geq \lambda_r >0, \, \|b_{j
  \mu}\|=1,\\ \nonumber
B_\mu &:=& \left(b_{j \mu} : 1 \leq j \leq r\right) \in \R^{m_\mu
\times r} \quad (1 \leq \mu \leq d),
\end{eqnarray}
and $B_\mu^TB_\mu=\Id$, see the example in \cite[Section
4.3.5]{Mohlenkamp2013}. Let us further assume that  $v_k=p_1^k \ten
p_2^k \ten \dots \ten p_d^k$ is already determined. Corollary
\ref{cor:ALSrecursion} leads to the recursion
\begin{equation}\label{equ:exInt2}
    p_1^{k+1}= \underbrace{\left[\frac{1}{\|v_k\|^2} B_1
    \Diag\left( \lambda_j^2 \prod_{\mu=2}^{d-1} \frac{\dotprod{b_{j
    \mu}}{p_\mu^k}}{\|p_\mu^k\|^2}^2\right)_{j=1, \dots, r}B_1^T\right]}_{G_1(p_1^k, \dots, p_d^k):=} p_1^k \quad (k\geq
    2).
\end{equation}
The linear map $G_1(p_1^k, \dots, p_d^k) \in \R^{m_1 \times m_1}$
describes the first micro step $p_1^k \ten p_2^k \ten \dots \ten
p_d^k \quad \mapsto p_1^{k+1} \ten p_2^k \ten \dots \ten p_d^k$ in
the ALS algorithm. The iteration matrix $G_1(p_1^k, \dots, p_d^k)$
is independent under rescaling of the representation system, i.e.
$G_1(\alpha_1 p_1, \dots, \alpha_d p_d) = G_1(p_1, \dots, p_d)$ for
$1=\prod_{\mu=1}^d \alpha_\mu$. Further, we can illustrate the
difficulties of the ALS iteration in higher dimensions. For $d=2$,
the ALS method is given by the two power iterations
\begin{eqnarray*}
% \nonumber to remove numbering (before each equation)
    p_1^{k+1}&=& \left[\frac{1}{\|p_1^k\|^2 \|p_2^k\|^2} B_1
    \Diag\left( \lambda_j^2 \right)_{j=1, \dots, r} B_1^T\right] p_1^k,\\
    p_2^{k+1}&=& \left[\frac{1}{\|p_1^{k+1}\|^2 \|p_2^k\|^2} B_2
    \Diag\left( \lambda_j^2 \right)_{j=1, \dots, r} B_2^T\right] p_2^k.
\end{eqnarray*}
Clearly, if the global minimum $b_1$ is isolated, i.e. $\lambda_1
> \lambda_2$, then the ALS method converges to $b_1$
provided that $\dotprod{v_0}{b_1}\neq0$, where $v_0=p_1^0 \ten p_2^0
\in \Vp$ is the initial guess. Further, we have linear convergence
\begin{equation*}
    \left|\tan\angle[b_{1\,\mu}, p_\mu^{k+1}]\right| \leq \left(\frac{\lambda_2}{\lambda_1}\right)^2\, \left|\tan\angle[b_{1\,\mu}, p_\mu^{k}]\right| \quad (1\leq \mu \leq
    2).
\end{equation*}
Note that in this example the angle $\angle[b_{1\,\mu}, p_\mu^{k}]$
is a more natural measure of the error than the usual distance
$\|b_{1\,\mu} - p_\mu^{k}\|$. For $d\geq 3$, the factor
$\prod_{\mu=2}^{d-1} \dotprod{b_{j \mu}}{p_\mu^k}^2/\|p_\mu^k\|^2$
from Eq. (\ref{equ:exInt2}) describes the behaviour of the ALS
iteration. Let $1 \leq j^*\leq r$. We say that a term $b_{j^*}$ from
Eq. (\ref{eq:defbIntroEx}) dominates at $v_k = p_1^k \ten \dots \ten
p_d^k$ if
\begin{equation}\label{eq:dominates}
    \sqrt[d-2]{\lambda_{j^*}^2}\dotprod{b_{j^*
\mu}}{p_\mu^k}^2 > \sqrt[d-2]{\lambda_{j}^2}\dotprod{b_{j
\mu}}{p_\mu^k}^2
\end{equation}
for all $j \in N_{j^*}:=\left\{j\in \N \, : \, 1\leq j\leq r \mbox{
and } j\neq j^*\right\}$ and all $\mu \in \N_d$. If $b_{j^*}$
dominates at $v_k$, then the recursion formula (\ref{equ:exInt2})
leads to
\begin{equation}\label{eq:superlinAls}
    \left|\tan\angle[b_{j^*\,1}, p_1^{k+1}] \right|\leq \underbrace{\frac{\max_{j \in N_{j^*}} \left( \lambda_{j}\prod_{\mu=2}^{d-1} \dotprod{b_{j
\mu}}{p_\mu^k}\right)^2}{\left(\lambda_{j^*}\prod_{\mu=2}^{d-1}
\dotprod{b_{j^*\mu}}{p_\mu^k}\right)^2}}_{<1}\,\left|\tan\angle[b_{j^*\,1},
p_1^{k}]\right|,
\end{equation}
i.e. the first component of the representation system $p_1^{k+1}$ is
turned towards the direction of $b_{j^*\,1}$. Note that for $r=2$
the bound for the convergence rate is sharp, i.e.
\begin{equation}\label{eq:superlinAlsSharp}
    \left|\tan\angle[b_{j^*\,1}, p_1^{k+1}]\right| = \frac{\max_{j \in N_{j^*}} \left( \lambda_{j}\prod_{\mu=2}^{d-1} \dotprod{b_{j
\mu}}{p_\mu^k}\right)^2}{\left(\lambda_{j^*}\prod_{\mu=2}^{d-1}
\dotprod{b_{j^*\mu}}{p_\mu^k}\right)^2}\,\left|\tan\angle[b_{j^*\,1},
p_1^{k}]\right| \quad (r=2).
\end{equation}
The inequality
\begin{eqnarray*}
    \sqrt[d-2]{\lambda_{j^*}^2}\dotprod{b_{j^*\, 1}}{p_1^{k+1}}^2&=&\frac{1}{\|v_k\|^4}\lambda_{j^*}^4 \prod_{\mu=2}^{d-1}\frac{\dotprod{b_{j^* \mu}}{p_\mu^k}^4}{\|p_\mu^k\|^4}\sqrt[d-2]{\lambda_{j^*}^2}\dotprod{b_{j^*\, 1}}{p_1^{k}}^2\\
    &>&\frac{1}{\|v_k\|^4}\lambda_{j}^4 \prod_{\mu=2}^{d-1}\frac{\dotprod{b_{j \mu}}{p_\mu^k}^4}{\|p_\mu^k\|^4}\sqrt[d-2]{\lambda_{j}^2}\dotprod{b_{j\, 1}}{p_1^{k}}^2 = \sqrt[d-2]{\lambda_{j}^2}\dotprod{b_{j\,
    1}}{p_1^{k+1}}^2
\end{eqnarray*}
shows that $b_{j^*}$ also dominates at the successor $p_1^{k+1} \ten
p_2^k \ten \dots \ten p_d^k$. Further, we have for all $j \in
N_{j^{*}}$
\begin{equation*}
    \frac{\sqrt[d-2]{\lambda_{j}^2}\dotprod{b_{j
,1}}{p_1^{k+1}}^2}{\sqrt[d-2]{\lambda_{j^*}^2}\dotprod{b_{j^*
,1}}{p_1^{k+1}}^2} =
\prod_{\mu=2}^{d-1}\left(\underbrace{\frac{\sqrt[d-2]{\lambda_{j}^2}\dotprod{b_{j
\mu}}{p_\mu^{k}}^2}{\sqrt[d-2]{\lambda_{j^*}^2}\dotprod{b_{j^*
\mu}}{p_\mu^{k}}^2}}_{<1}\right)^2\,\frac{\sqrt[d-2]{\lambda_{j}^2}\dotprod{b_{j
,1}}{p_1^{k}}^2}{\sqrt[d-2]{\lambda_{j^*}^2}\dotprod{b_{j^*
,1}}{p_1^{k}}^2} < \frac{\sqrt[d-2]{\lambda_{j}^2}\dotprod{b_{j
,1}}{p_1^{k}}^2}{\sqrt[d-2]{\lambda_{j^*}^2}\dotprod{b_{j^*
,1}}{p_1^{k}}^2}.
\end{equation*}
By analogy for the following micro steps, we have
\begin{equation*}
\frac{\max_{j \in N_{j^*}} \left( \lambda_{j}\prod_{\mu=2}^{d-1}
\dotprod{b_{j
\mu}}{p_\mu^{k+1}}\right)^2}{\left(\lambda_{j^*}\prod_{\mu=2}^{d-1}
\dotprod{b_{j^*\mu}}{p_\mu^{k+1}}\right)^2} < \frac{\max_{j \in
N_{j^*}} \left( \lambda_{j}\prod_{\mu=2}^{d-1} \dotprod{b_{j
\mu}}{p_\mu^k}\right)^2}{\left(\lambda_{j^*}\prod_{\mu=2}^{d-1}
\dotprod{b_{j^*\mu}}{p_\mu^k}\right)^2}.
\end{equation*}
Hence, the ALS iteration converges to $b_{j*}$. Now it is easy to
see that
\begin{equation*}
    \limsup_{k \rightarrow \infty} \left(\frac{\max_{j \in N_{j^*}} \left( \lambda_{j}  \prod_{\mu=2}^{d-1}
{\dotprod{b_{j
\mu}}{p_\mu^k}}\right)^2}{\left(\lambda_{j^*}\prod_{\mu=2}^{d-1}
{\dotprod{b_{j^*\mu}}{p_\mu^k}}\right)^2}\right)= 0.
\end{equation*}
Therefore, the tangent $\tan\angle[b_{j^*\,\mu}, p_\mu^{k}]$
converges $Q$-superlinearly, i.e.
\begin{equation*}
\left|\tan\angle[b_{j^*\,\mu}, p_\mu^{k}]\right| \xrightarrow[k
\rightarrow \infty]{}0 \quad \quad (\mbox{$Q$-superlinearly}).
\end{equation*}
Furthermore, the ALS iteration converges faster for  large $d$.
Unfortunately, there is no guarantee that the global minimum $b_1$
dominates at $v_k$. However, in this example it is more likely that
a chosen initial guess dominates at the global minimum. For
simplicity let us assume that $r=2$ and $\lambda_1 > \lambda_2$. see
Eq. (\ref{eq:defbIntroEx}). Since the Tucker ranks of $b$ are all
equal to $2$ and the condition from Eq. (\ref{eq:dominates}) does
not depend on the norm of the vectors from the representation
system, assume without loss of generality that for $\mu \in \N_d$
the representation system of every initial guess has the following
form:
\begin{equation*}
    p_\mu(\varphi_\mu)= \sin\left( \varphi_{\mu}\right)b_{\mu,2} + \cos\left(
    \varphi_{\mu}\right)b_{\mu,1}, \quad \left( \varphi_\mu \in \left[0,\frac{\pi}{2}\right], \, \|p_\mu(\varphi_\mu)\|=1\right).
\end{equation*}
If the global minimum dominates at the initial guess, we have for
all $\mu \in \N_d$
\begin{eqnarray*}
% \nonumber to remove numbering (before each equation)
    \sqrt[d-2]{\lambda_{1}^2}\dotprod{b_{1
\mu}}{p_\mu(\varphi_\mu)}^2 &>&
\sqrt[d-2]{\lambda_{2}^2}\dotprod{b_{2
\mu}}{p_\mu(\varphi_\mu)}^2\\
   \Leftrightarrow \tan\left(\varphi_\mu\right)&<&
   \sqrt[d-2]{\frac{\lambda_{1}}{\lambda_{2}}}.
\end{eqnarray*}
If we define the angle $\varphi^*_{d,\,\mu} \in \left[0,
\frac{\pi}{2}\right]$ such that

\begin{equation*}
    \tan\left(\varphi^*_{d,
    \,\mu}\right)=\sqrt[d-2]{\frac{\lambda_{1}}{\lambda_{2}}},
\end{equation*}
then every initial guess with $\varphi_\mu \in [0, \varphi^*_{d,
\,\mu})$ converges to the global minimum. Furthermore, we have
\begin{equation*}
  \tan(\varphi^*_{d, \,\mu}) >1  \quad \Leftrightarrow \quad \varphi^*_{d,\,\mu} >
  \frac{\pi}{4},
\end{equation*}
i.e. the slice where the global minimum is a point of attraction is
more potent then the slice where the local minimum $ \lambda_2 b_2$
is a point of attraction, see Figure \ref{bild:globMin} for
illustration. But we have for the asymptotic behavior
\begin{equation*}
\tan\left(\varphi^*_{d,
\,\mu}\right)=\sqrt[d-2]{\frac{\lambda_{1}}{\lambda_{2}}}
\xrightarrow[d \rightarrow \infty]{}1, \quad \Leftrightarrow\quad
\varphi^*_{d, \,\mu} \xrightarrow[d \rightarrow
\infty]{}\frac{\pi}{4},
\end{equation*}
i.e. for sufficiently large $d$ the slices are practically equal
potent.
\begin{figure}[h]
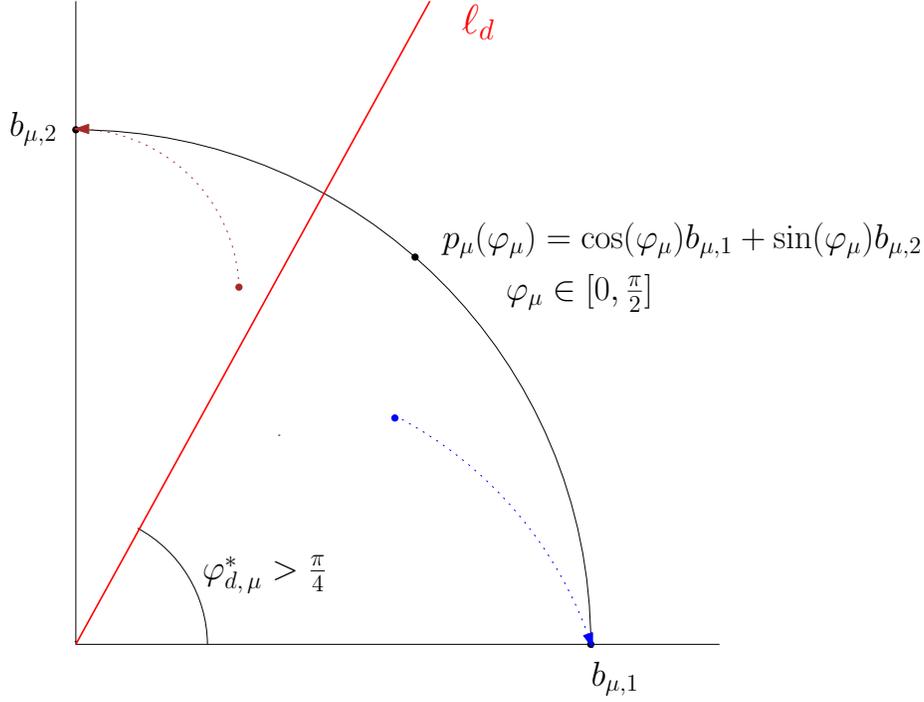

  \centering
  {\image{startwert}{0.72\textwidth}}
  \caption{The angle $\varphi^*_{d, \,\mu}$ describes the
  slice where the global minimum is a point of attraction. Every initial guess located under the red line $\ell_d$ will converge to the global
  minimum. Note that
  the angle $\varphi^*_{d, \,\mu}$ is larger then $\frac{\pi}{4}$, but interestingly enough $\varphi^*_{d, \,\mu} \xrightarrow[d \rightarrow
\infty]{}\frac{\pi}{4}$.}
  \label{bild:globMin}
\end{figure}

\end{example}

\begin{example}\label{exa:aram}
In the following example a sublinear convergence of ALS procedure for rank-one approximation is shown. We will consider the tensor $b_\lambda \in \Vp$ given by
\begin{equation*}
    b_\lambda = \Ten_{\mu=1}^3 p + \lambda \left(p \ten q \ten q + q \ten p \ten q + q \ten q \ten p \right)
\end{equation*}
for some $\lambda \geq 0$ and $p, q \in \R^n$ with $\|p\| = \|q\| = 1$ and $\dotprod{p}{q} = 0$. Let us first prove the following statement.
\begin{proposition} \label{prop:examplearam}
Define $v^* := \Ten_{\mu=1}^3 p$. Then
\begin{description}
  \item[a)] $\Mp_b = \{ v^* \}$, if $\lambda \leq \frac{1}{2}$
  \item[b)] $| \Mp_b | = 2$ and $v^* \notin \Mp_b$, if $\lambda > \frac{1}{2}$
\end{description}
\end{proposition}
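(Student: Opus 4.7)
My plan is to reduce the best rank-one approximation problem to an explicit maximisation on the product of three unit circles, classify the critical points, and compare the attained values. First, note that any non-zero rank-one tensor can be written as $v = \sigma\,x_1\otimes x_2\otimes x_3$ with $\|x_\mu\|=1$, so optimising over $\sigma$ alone gives $\sigma^* = \dotprod{b_\lambda}{x_1\otimes x_2\otimes x_3}$ and reduces the problem to maximising
\[
G(x_1,x_2,x_3)^2,\qquad G:=\dotprod{b_\lambda}{x_1\otimes x_2\otimes x_3},
\]
over unit $x_\mu$. Because $b_\lambda \in (\Span\{p,q\})^{\otimes 3}$, each $x_\mu$ may be restricted to $\Span\{p,q\}$; writing $x_\mu = c_\mu p + s_\mu q$ with $c_\mu^2+s_\mu^2=1$ produces
\[
G = c_1c_2c_3 + \lambda(c_1s_2s_3 + s_1c_2s_3 + s_1s_2c_3).
\]

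To classify the critical points of $G$, parametrise $c_\mu=\cos\theta_\mu$, $s_\mu=\sin\theta_\mu$. The three equations $\partial G/\partial\theta_\mu=0$ read $-s_\mu(c_\nu c_\rho+\lambda s_\nu s_\rho)+\lambda c_\mu(c_\nu s_\rho+s_\nu c_\rho)=0$ cyclically. Subtracting two of them and using the angle-addition identity yields $(1+\lambda)\,c_\rho\sin(\theta_\mu-\theta_\nu)=0$. Since $1+\lambda>0$, the critical set splits into two families. In Family~A all $c_\mu\neq 0$, which forces $\theta_1\equiv\theta_2\equiv\theta_3\pmod{\pi}$, i.e.\ $x_\mu=\varepsilon_\mu x$ with $x=cp+sq$ and $\varepsilon_\mu\in\{\pm 1\}$; a direct computation shows that $\sigma^*\,x_1\otimes x_2\otimes x_3 = \dotprod{b_\lambda}{x\otimes x\otimes x}\,x\otimes x\otimes x$ is independent of the sign pattern $(\varepsilon_\mu)$, so the critical tensor depends only on the line through $x$. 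The remaining stationary condition simplifies to $s\bigl[(3\lambda-1)c^2-\lambda\bigr]=0$, yielding either $s=0$ (so $v=v^*$ with $G^2=1$) or, provided $\lambda\geq\tfrac12$, $c^2=\lambda/(3\lambda-1)$, in which case $G=2\lambda c$ and $G^2=4\lambda^3/(3\lambda-1)$. Family~B, where some $c_\mu=0$, is handled by a short case-by-case argument on the system, which forces $(x_1,x_2,x_3)$ to be a permutation of signed basis vectors comprising one $p$ and two $q$'s, contributing $G^2=\lambda^2$.

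The comparison of the three candidate values rests on the factorisation
\[
4\lambda^3-(3\lambda-1)=(\lambda+1)(2\lambda-1)^2,
\]
which gives $4\lambda^3/(3\lambda-1)\geq 1$ for $\lambda\geq\tfrac12$ with equality iff $\lambda=\tfrac12$, together with $4\lambda^3/(3\lambda-1)-\lambda^2=\lambda^2(\lambda+1)/(3\lambda-1)>0$. For $\lambda\leq\tfrac12$ the non-trivial Family~A branch is inadmissible and $\lambda^2\leq\tfrac14<1$, so $G^2=1$ is the unique global maximum, attained only at $v^*$, giving $\Mp_b=\{v^*\}$. For $\lambda>\tfrac12$ the global maximum is $4\lambda^3/(3\lambda-1)>1$, attained exactly at $c^2=\lambda/(3\lambda-1)$, $s^2=(2\lambda-1)/(3\lambda-1)$; the four sign choices $(c,s)\in\{(\pm,\pm)\}$ collapse under $x\mapsto -x$ into the two distinct rank-one tensors $v_{\pm}=2\lambda c\,(cp\pm sq)\otimes(cp\pm sq)\otimes(cp\pm sq)$, so $|\Mp_b|=2$ and $v^*\notin\Mp_b$.

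The main obstacle, in my view, is the sign-bookkeeping in Family~A: one must verify that the optimised rank-one tensor $\sigma^*\,x_1\otimes x_2\otimes x_3$ depends only on the line through $x=cp+sq$, so that the $2^3$ possible sign patterns $(\varepsilon_\mu)$ reduce to a single geometric configuration and the final count $|\Mp_b|=2$ at $\lambda>\tfrac12$ is not doubled. Once this is in place, the identity $(\lambda+1)(2\lambda-1)^2$ makes the threshold $\lambda=\tfrac12$ emerge naturally.
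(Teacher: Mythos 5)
Your proof is correct, but it follows a genuinely different route from the paper. The paper's argument starts from the fact that a best rank-one approximation of the symmetric tensor $b_\lambda$ must itself be symmetric, writes it as $C_\lambda\,\Ten_{\mu=1}^3(p+\alpha_\lambda q)$ (using $\dotprod{b_\lambda}{q\ten q\ten q}=0$), and reduces everything to the one-parameter fixed-point equation $2\lambda\alpha_\lambda/(1+\lambda\alpha_\lambda^2)=\alpha_\lambda$, leaving the final value comparison as ``straightforward calculations.'' You instead work with the full first-order conditions of $G=\dotprod{b_\lambda}{x_1\ten x_2\ten x_3}$ on $(S^1)^3$ and \emph{derive} the symmetry of any maximiser from the identity $(1+\lambda)c_\rho\sin(\theta_\mu-\theta_\nu)=0$, so you never need the (nontrivial, externally quoted) symmetric-best-approximation fact; you also account explicitly for the non-symmetric critical points (your Family B, value $\lambda^2$), which the paper's symmetry reduction silently discards, and you make the threshold comparison explicit via $4\lambda^3-(3\lambda-1)=(\lambda+1)(2\lambda-1)^2$ — note that your $s^2/c^2=(2\lambda-1)/\lambda$ recovers exactly the paper's $\alpha_\lambda^2$. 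What the paper's route buys is brevity: one scalar equation in $\alpha_\lambda$ instead of a critical-point classification; what yours buys is self-containedness and a complete inventory of stationary values, at the cost of two steps you only sketch (the restriction of maximisers to $\Span\{p,q\}$ and the Family B case analysis — both routine and correct as claimed). One cosmetic wrinkle: at $\lambda=\tfrac12$ your ``nontrivial'' branch is not so much inadmissible as degenerate ($c^2=1$ forces $s=0$, collapsing onto $v^*$), which is exactly what makes part a) include the boundary case; it would be worth one sentence to say so.
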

\begin{proof}
Let $v^*_\lambda \in \Mp_b$. Since tensor $b$ is symmetric, $v^*_\lambda$ also has to be symmetric. Write $v^*_\lambda = C_{\lambda} \Ten_{\mu=1}^3 p_\lambda$, where $p_\lambda = p + \alpha_\lambda q$ (this is possible, since $\dotprod{b}{q \Ten q \Ten q} = 0$). Now the tuple $(C_\lambda p_\lambda, p_\lambda, p_\lambda)$ is a stationary point of $F$, therefore
\begin{equation*}
    \left( \Id_{\R^n} \ten p_\lambda \ten p_\lambda \right)^T b = C p_\lambda
\end{equation*}
for some $C \in \R$. But
\begin{equation*}
    \left( \Id_{\R^n} \ten p_\lambda \ten p_\lambda \right)^T b = (1+\lambda \alpha_\lambda^2)p + 2\lambda\alpha_\lambda q,
\end{equation*}
hence
\begin{equation} \label{eq:alphalambda}
    \frac{2\lambda\alpha_\lambda}{1+\lambda \alpha_\lambda^2} = \alpha_\lambda.
\end{equation}
The solutions of \eqref{eq:alphalambda} are
\begin{equation*}
    \alpha_\lambda =  \left\{
    \begin{array}{ll}
    0, & \hbox{if $\lambda \leq \frac{1}{2}$,} \\
    0, \sqrt{\frac{2\lambda-1}{\lambda}} \text{ or } -\sqrt{\frac{2\lambda-1}{\lambda}}, & \hbox{if $\lambda > \frac{1}{2}$.}
    \end{array}
        \right.
\end{equation*}
Straightforward calculations show that for $\lambda > \frac{1}{2}$ the solutions $\alpha_\lambda = \pm \sqrt{\frac{2\lambda-1}{\lambda}}$ lead to the same value of $F$ which is smaller than $f(v^*)$.
\end{proof}
Now let $\lambda \leq \frac{1}{2}$ and $v_k= C^k p_1^k \ten p_2^k \ten \dots \ten p_d^k$, with $p_\mu^k = c_\mu^k p + s_\mu^k q$, ${c_{\mu,k}}^2 + {s_\mu^k}^2 = 1$ and some  $C^k \in\R$. Define $\gamma_{\mu,k} := \left(
                             \begin{array}{c}
                               c_{\mu,k} \\
                               s_{\mu,k} \\
                             \end{array}
                           \right)$. Applying Corollary \ref{cor:ALSrecursion}, one gets after short calculations the recursion formula
\begin{equation*}
    \gamma_{1,k+1} = C_{1,k} M_{1, k} M_{1, k}^T \gamma_{1,k}
\end{equation*}
with some $C_{1,k} \in \R$ and
\begin{equation*}
    M_{1,k} = \left(
              \begin{array}{cc}
                c_{2,k} & \lambda s_{2,k} \\
                \lambda s_{2,k} & \lambda c_{2,k} \\
              \end{array}
            \right).
\end{equation*}
Then for $t_{1,k} := \frac{s_{1,k}}{c_{1,k}}$ it holds
\begin{equation}\label{eq:recursionCS}
     t_{1,k+1} = \frac{\lambda (\lambda+1) c_{2,k} c_{1,k} \frac{s_{2,k}}{s_{1,k}} + \lambda^2}{c^2_{2,k} + \lambda^2 s^2_{2,k} + \lambda (\lambda+1) \frac{c_{2,k}}{c_{1,k}} c_{2,k} s_{1,k}} \frac{s_{1,k}}{c_{1,k}}.
\end{equation}
Thanks to Corollary \ref{cor:ALSisolatedconvergence} and Proposition \ref{prop:examplearam} we know, that $\lim_{k\rightarrow \infty } v^k = v^*$ for $v^* = \Ten_{\mu=1}^3 p$, therefore
\begin{eqnarray}\label{eq:convergenceCS}
    \lim_{k\rightarrow \infty } c_{\mu,k} = 1 \\
    \lim_{k\rightarrow \infty } s_{\mu,k} = 0
\end{eqnarray}
for $\mu \in \N_3$. From Eq. \ref{eq:convergenceCS} and \ref{eq:recursionCS} one gets
\begin{equation}\label{eq:limsup1}
    \limsup_{k\rightarrow \infty} \frac{t_{1,k+1}}{t_{1,k}} = \lambda^2 + \lambda (\lambda+1) \limsup_{k\rightarrow \infty} \frac{s_{2,k}}{s_{1,k}}.
\end{equation}
The same way
\begin{eqnarray}\label{eq:limsup2}
    \limsup_{k\rightarrow \infty} \frac{t_{2,k+1}}{t_{2,k}} = \lambda^2 + \lambda (\lambda+1) \limsup_{k\rightarrow \infty} \frac{s_{3,k}}{s_{2,k}} \\        \limsup_{k\rightarrow \infty} \frac{t_{3, k+1}}{t_{3,k}} = \lambda^2 + \lambda (\lambda+1) \limsup_{k\rightarrow \infty} \frac{s_{1,k+1}}{s_{3,k}}
\end{eqnarray}

Furthermore, from Eq. \eqref{eq:recursion2} we know that
\begin{equation*}
    p_{2,\, k+1} = C_{2,k+1} M_{2,k} p_{1,\,k+1}
\end{equation*}
with some $C_{2,k+1} \in \R$ and
\begin{equation*}
    M_{2,k} = \left(
              \begin{array}{cc}
                c_{3,k} & \lambda s_{3,k} \\
                \lambda s_{3,k} & \lambda c_{3,k} \\
              \end{array}
            \right).
\end{equation*}
Simple calculations result in the relation
\begin{equation*}
    \frac{s_{2,k+1}}{s_{1,k+1}} = \lambda \frac{s_{3,k}}{s_{1,k+1}} c_{1,k+1} + \lambda c_{3,k},
\end{equation*}
and hence
\begin{equation} \label{eq:limsup3}
    \limsup_{k \rightarrow \infty} \frac{s_{2,k}}{s_{1,k}} = \lambda + \lambda \limsup_{k \rightarrow \infty} \frac{s_{3,k}}{s_{1,k+1}}
\end{equation}
Now let $\lambda = \frac{1}{2}$. If $\limsup_{k \rightarrow \infty}
\frac{s_{2,k}}{s_{1,k}} \geq 1$ , then from Eq. \eqref{eq:limsup1}
follows $\limsup_{k\rightarrow \infty} \frac{t_{1,k+1}}{t_{1,k}}
\geq 1$, hence the convergence of $p_{1,k}$ to $p$ can not be
Q-linearly. If $\limsup_{k \rightarrow \infty}
\frac{s_{2,k}}{s_{1,k}} < 1$, then from Eq. \eqref{eq:limsup3}
$\limsup_{k \rightarrow \infty} \frac{s_{1,k+1}}{s_{3,k}} \geq 1$,
so from Eq. \eqref{eq:limsup2} $\limsup_{k\rightarrow \infty}
\frac{t_{3,k+1}}{t_{3,k}} \geq 1$.
\begin{remark}$ $
\begin{itemize}
    \item[a)] In fact for $\lambda = \frac{1}{2}$ it holds
     \begin{equation*}
        \limsup_{k \rightarrow \infty} \frac{s_{2,k}}{s_{1,k}} = \limsup_{k \rightarrow \infty} \frac{s_{3,k}}{s_{2,k}} = \limsup_{k \rightarrow \infty} \frac{s_{1,k+1}}{s_{3,k}} = 1.
     \end{equation*}
    \item[b)] For $\lambda < \frac{1}{2}$ ALS converges q-linearly with the convergence rate
        \begin{equation*}
            \rho = \frac{\lambda}{2} \left(3 \lambda + \lambda^2 + \sqrt{(3 \lambda + \lambda^2)^2 + 4 \lambda}\right).
        \end{equation*}
    \item[c)] The example can be extended to higher dimensions in the following way. Let
    \begin{equation*}
        b_\lambda = \Ten_{\mu=1}^d p + \lambda \sum_{\mu=1}^d \left(\Ten_{\nu=1}^{\mu-1} q \ten p \ten \Ten_{\nu=\mu+1}^d q\right)
    \end{equation*}
    with $\|p\| = \|q\|$ and $\dotprod{p}{q} = 0$. Then $v^* = \Ten_{\mu=1}^d p$ is the unique best rank-one approximation of $b_\lambda$ if and only if $\lambda \leq \frac{1}{d-1}$. Furthermore, ALS converges sublinear for $\lambda = \frac{1}{d-1}$ and Q-linear for $\lambda < \frac{1}{d-1}$.
\end{itemize}
\end{remark}

\end{example}

Our new convergence results are not obtained by using conventional
technics like for the analysis of nonlinear Gauss-Seidel method or
the theory of Lojasiewicz inequalities. Therefore, a detailed
convergence approach is necessary.

\section{The Alternating Least Squares Algorithm}\label{sec:ALSMethod}
In the following section, we recall the ALS algorithm. Where the
algorithmic description of the ALS method is given in Algorithm
\ref{alg:ALS}.

\begin{algorithm}[h]\caption{Alternating Least Squares (ALS)
Algorithm}\label{alg:ALS}
\begin{algorithmic}[1]
    \STATE Set $k:=1$ and choose an initial guess $\ve{p}_1=(p^1_1, \dots, p_d^1)\in
    P$, $\ve{p}_{1,\, 0}:=\ve{p}_1$, and $v_{1}:=U(\ve{p}_1) \neq 0$.
    \WHILE{Stop Condition}
        \STATE $v_{k, \,0}:=v_{k}$
        \FOR {${1 \leq \mu \leq d}$}
        \STATE
            \begin{eqnarray}
              % \nonumber to remove numbering (before each equation)
                p_{\mu}^{k+1}&:=& \left(\frac{p_1^{k+1}}{\left\|p_1^{k+1}\right\|^2} \ten \cdots \ten \frac{p_{\mu-1}^{k+1}}{\left\|p_{\mu-1}^{k+1}\right\|^2}
                \ten \Id_{\R^{n_\mu}} \ten \frac{p_{\mu+1}^{k}}{\left\|p_{\mu+1}^{k}\right\|^2} \ten \cdots \ten \frac{p_{d}^{k}}{\left\|p_{d}^{k}\right\|^2} \right)^T b\label{eq:defPmu}\\ \nonumber
                \ve{p}_{k, \mu+1}&:=& (p_1^{k+1}, \dots, \,p_{\mu-1}^{k+1} \,, p_\mu^{k+1}, p_{\mu+1}^{k}, \dots,
                p_L^{k})\nonumber\\
                v_{k, \mu+1}&:=&U(\ve{p}_{k, \mu+1})\nonumber
            \end{eqnarray}
        \ENDFOR
    \STATE $\ve{p}_{k+1}:= \ve{p}_{k, L}$ and
    $v_{k+1}:=U(\ve{p}_{k+1})$
    \STATE $k \mapsto k + 1$
    \ENDWHILE
  \end{algorithmic}
\end{algorithm}

\begin{note}[$L(A,B)$, $P_{\nu, \mu}$]
Let $A, B$ be two arbitrary vector spaces. The vector space of
linear maps from $A$ to $B$ is denoted by
\begin{equation*}
 L(A,B):=\left\{ M : A \rightarrow B \, : \, M \mbox{ is linear}\right\}.
\end{equation*}
Let $\mu, \, \nu \in \N_d$ with $\nu\neq\mu$. We define
\begin{equation*}
    P_{\nu, \mu}:=\R^{n_{1}} \times \dots \times \R^{n_{\nu-1}}
\times \R^{n_{\nu+1}} \times \dots \times \R^{n_{\mu-1}} \times
\R^{n_{\mu+1}} \times \dots \times \R^{n_{d}}.
\end{equation*}
\end{note}

The following map $M_{\mu, \nu}$ from Lemma \ref{lemma:ALSrecursion}
is important for the analytical understanding of the ALS algorithm.
As Corollary \ref{cor:ALSrecursion} shows, the map $M_{\mu,
{\mu-1}}$ describes an micro step of the ALS algorithm. Furthermore,
there is an interesting relation between the map $M_{\mu, \nu}$ and
rank-one best approximations of the tensor $b$, see Theorem
\ref{the:singularValues}.

\begin{lemma}\label{lemma:ALSrecursion}
Let $\mu, \nu \in \N_d$, $\nu\neq \mu$, and $\underline{p}_{\nu,\,
\mu}=(p_1,\dots, p_{\nu-1}, p_{\nu+1}, \dots, p_{\mu-1}, p_{\mu+1},
\dots, p_d) \in P_{\nu, \mu}$. There exists a multilinear map
$M_{\nu, \mu} : P_{\nu, \mu} \times \Vp \rightarrow L(\R^{n_{\nu}},
\R^{n_{\mu}})$ such that
\begin{equation}\label{eq:recursion}
   M_{\nu,\mu}(\underline{p}_{\nu,\, \mu}, b) \mathbf{g_\nu}= \left(p_1\ten
\dots \ten p_{\nu-1} \ten \mathbf{g_\nu}\ten p_{\nu+1}\ten \dots\ten
p_{\mu-1}\ten \Id_{\R^{n_\mu}}\ten p_{\mu+1} \ten \dots \ten
p_d\right)^T b
\end{equation}
for all $\mathbf{g_\nu} \in \R^{n_\nu}$. Further, we have
$M_{\mu,\nu}(\underline{p}_{\nu,\, \mu},
b)=M^T_{\nu,\mu}(\underline{p}_{\nu,\, \mu}, b)$.
\end{lemma}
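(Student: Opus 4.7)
The plan is to define $M_{\nu,\mu}(\underline{p}_{\nu,\mu}, b)$ directly by the right-hand side of \eqref{eq:recursion}, and then to verify multilinearity and the transpose identity from the standard adjoint property of tensor products with respect to the Euclidean inner product on $\Vp$.

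First I would fix $\underline{p}_{\nu,\mu} \in P_{\nu,\mu}$ and $b \in \Vp$ and observe that the tensor $p_1 \ten \cdots \ten p_{\nu-1} \ten g_\nu \ten p_{\nu+1} \ten \cdots \ten p_{\mu-1} \ten \Id_{\R^{n_\mu}} \ten p_{\mu+1} \ten \cdots \ten p_d$, regarded as a linear operator $\R^{n_\mu} \to \Vp$, depends linearly on $g_\nu \in \R^{n_\nu}$. Taking its transpose and applying to $b$ yields an element of $\R^{n_\mu}$ that is still linear in $g_\nu$, so the prescription $g_\nu \mapsto (\cdots)^T b$ defines a linear map in $L(\R^{n_\nu}, \R^{n_\mu})$, which I take as the definition of $M_{\nu,\mu}(\underline{p}_{\nu,\mu}, b)$. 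Multilinearity in the remaining arguments is then immediate: in each slot of $\underline{p}_{\nu,\mu}$ the tensor inside the parentheses is linear by multilinearity of $\ten$, and the transpose contraction against $b$ preserves this linearity; linearity in $b$ itself is obvious.

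For the transpose identity I would compute the associated bilinear form. For arbitrary $g_\nu \in \R^{n_\nu}$ and $g_\mu \in \R^{n_\mu}$, the adjoint property of partial contraction gives
\begin{equation*}
\dotprod{M_{\nu,\mu}(\underline{p}_{\nu,\mu}, b)\, g_\nu}{g_\mu} = \dotprod{p_1 \ten \cdots \ten g_\nu \ten \cdots \ten g_\mu \ten \cdots \ten p_d}{b},
\end{equation*}
with $g_\nu$ inserted in slot $\nu$ and $g_\mu$ in slot $\mu$, since replacing $\Id_{\R^{n_\mu}}$ by $g_\mu$ and then taking the inner product with $b$ is equivalent to pairing $M_{\nu,\mu}(\underline{p}_{\nu,\mu}, b)\, g_\nu$ with $g_\mu$. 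Swapping the roles of $\nu$ and $\mu$ in the same construction, the identical scalar also equals $\dotprod{g_\nu}{M_{\mu,\nu}(\underline{p}_{\nu,\mu}, b)\, g_\mu}$. Since $g_\nu, g_\mu$ are arbitrary, $M_{\mu,\nu}(\underline{p}_{\nu,\mu}, b) = M_{\nu,\mu}^T(\underline{p}_{\nu,\mu}, b)$ follows at once.

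There is no genuine obstacle here beyond notational bookkeeping: the proof is a direct unwinding of what partial contraction means. The only mild care point concerns the two subcases $\nu < \mu$ and $\nu > \mu$, which produce different orderings of the factors in \eqref{eq:recursion} but do not affect the argument, as both the adjoint identity and the multilinearity are manifestly symmetric in $\nu$ and $\mu$.
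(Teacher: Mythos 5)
Your proposal is correct and follows essentially the same route as the paper, whose proof is simply the one-line observation that the claim follows from the multilinearity of the tensor product and elementary calculations; you merely spell out those calculations (definition via the right-hand side of \eqref{eq:recursion}, linearity in each slot, and the bilinear-form argument for the transpose identity).
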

\begin{proof}
Follows directly form the multilinearity of the tensor product and
elementary calculations.
\end{proof}

\begin{example}\label{exa:rank1}
Let $\mu, \nu \in \N_d$, $\nu\neq \mu$, $\underline{p}_{\nu,\,
\mu}=(p_1,\dots, p_{\nu-1}, p_{\nu+1}, \dots, p_{\mu-1}, p_{\mu+1},
\dots, p_d) \in P_{\nu, \mu}$, and $b$ be given in a subspace
decomposition, i.e.
\begin{eqnarray*}
% \nonumber to remove numbering (before each equation)
    b&=& \sum_{i_1=1}^{t_1} \dots \sum_{i_d=1}^{t_d} \beta_{(i_1, \dots, i_d)} \Ten_{\mu=1}^d b_{\mu,
    i_\mu} \quad (t_\mu \in \N_{n_\mu})
\end{eqnarray*}
A matrix representation of the linear map $M_{\nu, \, \mu}$ is given
by
\begin{eqnarray*}
M_{\nu,\mu}(\underline{p}_{\nu,\, \mu}, b) &=&
\sum_{i_1=1}^{t_{1}}\cdots \sum_{i_\nu=1}^{t_{\nu}}
\cdots\sum_{i_\mu=1}^{t_{\mu}} \cdots \sum_{i_d=1}^{t_{d}}
\beta_{(i_1, \dots, i_d)} \, \prod_{\xi \in \N_d\setminus\{\mu,\,
\nu\}} \dotprod{b_{\xi, i_\xi}}{p_\xi}
\,b_{\mu, i_\mu} b^T_{\nu, i_\nu}\\
&=&B_\mu \Gamma(\underline{p}_{\nu,\, \mu}) B_{\nu}^T,
\end{eqnarray*}
where $B_{\xi}=\left( b_{\xi, 1}, \dots, b_{\xi, t_\xi}\right) \in
\R^{n_\xi \times t_\xi}$ for all $\xi \in \{\mu, \nu\}$ and the
entries of the matrix $\Gamma(\underline{p}_{\nu,\, \mu})$ are
defined by
\begin{equation*}
[\Gamma(\underline{p}_{\nu,\, \mu})]_{(i_\nu, i_\mu)} =
\sum_{i_1=1}^{t_1}\cdots \sum_{i_{\nu-1}=1}^{t_{\nu-1}}\cdots
\sum_{i_{\nu+1}=1}^{t_{\nu+1}}\cdots\sum_{i_{\mu-1}=1}^{t_{\mu-1}}\cdots\sum_{i_{\mu+1}=1}^{t_{\mu+1}}\cdots
\sum_{i_{d}=1}^{t_{d}}\beta_{(i_1, \dots, i_d)} \, \prod_{\xi \in
\N_d\setminus\{\mu,\, \nu\}} \dotprod{b_{\xi, i_\xi}}{p_\xi}.
\end{equation*}
\end{example}
\begin{corollary}\label{cor:ALSrecursion}
Let $\mu \in \N_d$, $k \geq 2$, and $\ve{p}_{k, \mu}= (p^{k+1}_1,
\dots, p^{k+1}_{\mu-1}, p_\mu^k,p^{k}_{\mu+1}, \dots, p_d^k) \in P$
form Algorithm \ref{alg:ALS}. With the matrix from Lemma
\ref{lemma:ALSrecursion}, the following recursion formula holds:
\begin{eqnarray}\label{eq:recursion1}
p^{k+1}_\mu&=&\frac{1}{G_{k, \, \mu} G_{k,\, \mu-1}} M_{\mu,k}\,
M^T_{\mu,k} \,\, p_{\mu}^{k},
\end{eqnarray}
where
\begin{eqnarray*}
% \nonumber to remove numbering (before each equation)
  G_{k, \, \mu} &:=&  \prod_{\nu=1}^{\mu-1} \left\|p_\nu^{k+1}\right\|^2 \prod_{\nu=\mu+1}^{d} \left\|p_\nu^{k}\right\|^2\\
  G_{k, \, \mu-1} &:=&  \prod_{\nu=1}^{\mu-2} \left\|p_\nu^{k+1}\right\|^2 \prod_{\nu=\mu}^{d} \left\|p_\nu^{k}\right\|^2,\\
  M_{\mu,k}&:=& M_{\mu, \mu-1}(p_1^{k+1},
\dots, p_{\mu-2}^{k+1}, p_{\mu+1}^k, \dots, p_d^k, b).
\end{eqnarray*}
\end{corollary}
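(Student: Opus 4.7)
My plan is to derive the recursion by unrolling the two most recent micro-steps of Algorithm~\ref{alg:ALS}, namely the update giving $p^{k+1}_{\mu-1}$ followed by the one giving $p^{k+1}_\mu$, and identifying each of them as a linear map furnished by Lemma~\ref{lemma:ALSrecursion}.

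Starting from the defining identity \eqref{eq:defPmu} for $p^{k+1}_\mu$, I would first pull the scalar factors $\|p_\nu^\bullet\|^{-2}$ out of the tensor product; they collect precisely into the prefactor $1/G_{k,\mu}$, leaving the unnormalised contraction
\[
    p^{k+1}_\mu \;=\; \frac{1}{G_{k,\mu}}\bigl(p_1^{k+1}\ten\cdots\ten p_{\mu-1}^{k+1}\ten\Id_{\R^{n_\mu}}\ten p_{\mu+1}^{k}\ten\cdots\ten p_{d}^{k}\bigr)^T b.
\]
Lemma~\ref{lemma:ALSrecursion}, applied with parameters $\nu=\mu-1$ and mode $\mu$, identifies this contraction with $M_{\mu-1,\mu}(\underline{p},b)\,p^{k+1}_{\mu-1}$ for the tuple $\underline{p}=(p_1^{k+1},\dots,p_{\mu-2}^{k+1},p_{\mu+1}^{k},\dots,p_{d}^{k})\in P_{\mu-1,\mu}$; the transpose assertion of that lemma then rewrites it in terms of $M_{\mu,k}$ by the very definition of $M_{\mu,k}$.

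I would then repeat the same unrolling one micro-step earlier: formula \eqref{eq:defPmu} with $\mu$ replaced by $\mu-1$ expresses $p^{k+1}_{\mu-1}$ as $1/G_{k,\mu-1}$ times the contraction of $b$ with $\Id_{\R^{n_{\mu-1}}}$ in mode $\mu-1$ and with $p_\mu^k$ in mode $\mu$. The crucial observation here is that the tuple of fixed vectors entering this contraction is exactly the same $\underline{p}$ as above, because both modes $\mu-1$ and $\mu$ are skipped in either application. Lemma~\ref{lemma:ALSrecursion} with $\nu=\mu$ and mode $\mu-1$ then identifies that contraction as $M_{\mu,\mu-1}(\underline{p},b)\,p^{k}_\mu=M_{\mu,k}\,p^{k}_\mu$. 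Substituting this expression for $p^{k+1}_{\mu-1}$ into the previous line yields the stated recursion.

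The only genuine obstacle is index bookkeeping. One must verify that the two applications of Lemma~\ref{lemma:ALSrecursion} really share the same tuple $\underline{p}$, so that the two matrices involved are genuinely transposes of one another via the last assertion of Lemma~\ref{lemma:ALSrecursion} (and hence combine into the self-adjoint product $M_{\mu,k}M^T_{\mu,k}$ rather than some unrelated product), and that the omitted-norm products in the two successive micro-steps combine to exactly $G_{k,\mu}G_{k,\mu-1}$. Both checks reduce to carefully distinguishing the lemma's \emph{input} index $\nu$ from its \emph{mode} index $\mu$ and are direct from the definitions; no further analytic argument is needed.
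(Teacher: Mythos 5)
Your proposal is correct and takes essentially the same route as the paper's proof, which simply writes the current micro-step as $p^{k+1}_\mu=\tfrac{1}{G_{k,\mu}}M_{\mu,k}\,p^{k+1}_{\mu-1}$ and the preceding micro-step as $p^{k+1}_{\mu-1}=\tfrac{1}{G_{k,\mu-1}}M^{T}_{\mu,k}\,p^{k}_{\mu}$ using Eq.~(\ref{eq:defPmu}) and Lemma~\ref{lemma:ALSrecursion}, and then substitutes the second identity into the first. The bookkeeping you flag (the shared tuple $\underline{p}$, the combination of the norm factors into $G_{k,\mu}G_{k,\mu-1}$, and the input/output index convention behind $M_{\mu,k}M^{T}_{\mu,k}$ versus its transpose) is exactly the content of the paper's two displayed equations.
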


\begin{proof}
We have with Eq. (\ref{eq:defPmu}) and Lemma
\ref{lemma:ALSrecursion}

\begin{eqnarray}\label{eq:recursion2}
% \nonumber to remove numbering (before each equation)
    p^{k+1}_\mu&=&\frac{1}{G_{k, \, \mu}} M_{\mu, \mu-1}(p_1^{k+1},
\dots, p_{\mu-2}^{k+1}, p_{\mu+1}^k, \dots, p_d^k, b)
p_{\mu-1}^{k+1},\\
    p_{\mu-1}^{k+1}&=&\frac{1}{G_{k, \, \mu-1}} M^T_{\mu, \mu-1}(p_1^{k+1},
\dots, p_{\mu-2}^{k+1}, p_{\mu+1}^k, \dots, p_d^k, b) p_{\mu}^{k}.
\end{eqnarray}

\end{proof}
\begin{example}\label{exa:rank1}
Let $v_k = p_1^k \ten p_2^k \ten \dots \ten p_d^k$ and
\begin{equation*}
    b= \sum_{i_1=1}^{t_1} \dots \sum_{i_d=1}^{t_d} \beta_{(i_1, \dots, i_d)} \Ten_{\mu=1}^d b_{\mu,
    i_\mu},
\end{equation*}
i.e. the tensor $b$ is given in the Tucker decomposition. From Eq.
(\ref{eq:defPmu}) it follows
\begin{eqnarray*}
% \nonumber to remove numbering (before each equation)
  p_1^{k+1} &=& \frac{1}{\prod_{\mu=2}^d \left\|p_\mu^k\right\|^2}\sum_{i_1=1}^{t_1} \dots \sum_{i_d=1}^{t_d} \beta_{(i_1, \dots,
  i_d)} \prod_{\mu=2}^d \dotprod{b_{\mu, i_\mu}}{p_\mu^k} b_{1,
  i_1}\\
  &=& \frac{1}{\prod_{\mu=2}^{d-1} \left\|p_\mu^k\right\| \|p_d^k\|^2} \left[\sum_{i_1=1}^{t_1} \sum_{i_d=1}^{t_d}  b_{1, i_1} \sum_{i_2=1}^{t_2}\dots \sum_{i_{d-1}=1}^{t_{d-1}} \beta_{(i_1, \dots,
  i_d)} \prod_{\mu=2}^{d-1} \frac{\dotprod{b_{\mu, i_\mu}}{p_\mu^k}}{\|p_\mu^k\|} b^T_{d,
  i_d}\right]p_d^k\\
  &=&\frac{1}{\prod_{\mu=2}^{d-1} \left\|p_\mu^k\right\| \|p_d^k\|^2}B_1
  \Gamma_{1,k} B_d^T p_d^k,
\end{eqnarray*}
where $B_\mu=\left( b_{\mu, i_\mu}\,:\, 1\leq i_\mu \leq
t_\mu\right) \in \R^{n_\mu \times t_\mu}$,
$B_\mu^TB_\mu=\Id_{\R^{t_\mu}}$, and the entries of the matrix
$\Gamma_{1,k} \in \R^{t_1 \times t_d}$ are defined by
\begin{equation*}
    [\Gamma_{1, k}]_{i_1, i_d} = \sum_{i_2=1}^{t_2}
\dots \sum_{i_{d-1}=1}^{t_{d-1}} \beta_{(i_1, \dots,
  i_d)} \prod_{\mu=2}^{d-1} \frac{\dotprod{b_{\mu,
i_\mu}}{p_\mu^k}}{\|p_\mu^k\|} \quad \left( 1\leq i_1 \leq t_1, \, 1
\leq i_d \leq t_d\right).
\end{equation*}
Note that $\Gamma_{1,k}$ is a diagonal matrix if the coefficient
tensor $\beta \in \Ten_{\mu=1}^d \R^{t_\mu}$ is super- diagonal, see
Eq. (\ref{equ:exInt2}).
For $p_d^k$ it follows further
\begin{eqnarray*}
% \nonumber to remove numbering (before each equation)
  p_d^{k} &=& \frac{1}{\prod_{\mu=1}^{d-1} \left\|p_\mu^k\right\|^2}\sum_{i_1=1}^{t_1} \dots \sum_{i_d=1}^{t_d} \beta_{(i_1, \dots,
  i_d)} \prod_{\mu=1}^{d-1} \dotprod{b_{\mu, i_\mu}}{p_\mu^k} b_{d,
  i_d}
  =\frac{1}{\left\|p_1^k\right\|^2\prod_{\mu=2}^{d-1} \left\|p_\mu^k\right\|}B_d
  \Gamma^T_{1,k} B_1^T p_1^k
\end{eqnarray*}
and finally
\begin{equation*}
    p_1^{k+1}= \frac{1}{\prod_{\mu=1}^d \left\|p_\mu^k\right\|^2}\, B_1
  \Gamma_{1,k}\Gamma^T_{1,k} B_1^T \,p_1^k.
\end{equation*}
\end{example}

Let $v^*= \lambda \, p_1 \ten \dots \ten p_d \in \Mp_b$ be a
rank-one best approximation of $b$. Without loss of generality we
can assume that
\begin{equation*}
 \|p_1\| = \|p_2\| = \cdots = \|p_d\|=1 \mbox{ and } \|v^*\|=\lambda.
\end{equation*}
Further, let $\mu, \nu \in \N_d$ and
\begin{equation*}
    \underline{p}_{\nu,\,
\mu}:=(p_1,\dots, p_{\nu-1}, p_{\nu+1}, \dots, p_{\mu-1}, p_{\mu+1},
\dots, p_d) \in P_{\nu, \mu}.
\end{equation*}
The following two maps are of interest for our analysis:
\begin{eqnarray*}
% \nonumber to remove numbering (before each equation)
  \bar{V} : S^{n_\nu-1} \times S^{n_\mu-1}&\rightarrow& \Vp \\
  (g_\nu, g_\mu) &\mapsto& \bar{V}(g_\nu, g_\mu):=p_1 \ten \cdots
  \ten p_{\nu-1} \ten g_\nu \ten p_{\nu+1} \ten \cdots \ten
  p_{\mu-1} \ten g_\mu \ten p_{\mu+1} \ten \cdots \ten p_d
\end{eqnarray*}
and
\begin{eqnarray*}
% \nonumber to remove numbering (before each equation)
  \bar{U} : S^{n_\nu-1} \times S^{n_\mu-1}&\rightarrow& \Vp \\
  (g_\nu, g_\mu) &\mapsto& \bar{U}(g_\nu, g_\mu):=\dotprod{\bar{V}(g_\nu, g_\mu)}{b}\bar{V}(g_\nu,
  g_\mu),
\end{eqnarray*}
where $S^{n-1} = \{x \in \R^n : \|x\| = 1\}$ denotes the sphere in
$\R^n$.

\begin{lemma}\label{lemma:functionValues}
Let $\mu, \nu \in \N_d$, $g_\nu \in S^{n_\nu-1}$ and $g_\mu \in
S^{n_\mu-1}$. We have
\begin{eqnarray*}
% \nonumber to remove numbering (before each equation)
  -2f\left(\bar{U}(g_\nu, g_\mu)\right) &=& \dotprod{\underbrace{\left(M_{\nu,\mu}(\underline{p}_{\nu,\, \mu},
  b)\right)}_{\in L(\R^{n_\nu}, \R^{n_\mu})}
  g_{\nu}}{g_\mu}^2 = \dotprod{\bar{U}(g_\nu, g_\mu)}{b} = \left\|\bar{U}(g_\nu,
  g_\mu)\right\|^2.
\end{eqnarray*}
\end{lemma}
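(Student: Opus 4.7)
The plan is to reduce every quantity in the claimed chain of equalities to a single scalar $c := \dotprod{b}{\bar{V}(g_\nu, g_\mu)}$ and then verify the equalities one by one; everything follows from multilinearity and the unit-norm normalisation of the $p_\xi$.

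First I would note that, by the standing normalisation $\|p_1\|=\dots=\|p_d\|=1$ introduced before the definition of $\bar{V}$, together with $\|g_\nu\|=\|g_\mu\|=1$, the tensor $\bar{V}(g_\nu, g_\mu)$ is a product of unit vectors and so
\begin{equation*}
  \|\bar{V}(g_\nu, g_\mu)\|^2 \;=\; \prod_{\xi \in \N_d \setminus \{\nu,\mu\}} \|p_\xi\|^2 \cdot \|g_\nu\|^2 \|g_\mu\|^2 \;=\; 1.
\end{equation*}
Hence the definition of $\bar{U}$ gives $\bar{U}(g_\nu, g_\mu) = c\,\bar{V}(g_\nu, g_\mu)$, from which the two rightmost equalities
\begin{equation*}
  \dotprod{\bar{U}(g_\nu, g_\mu)}{b} \;=\; c^2 \;=\; \|\bar{U}(g_\nu, g_\mu)\|^2
\end{equation*}
are immediate.

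Next I would unfold the defining formula \eqref{eq:recursion} of $M_{\nu,\mu}$ from Lemma~\ref{lemma:ALSrecursion}: pairing $M_{\nu,\mu}(\underline{p}_{\nu,\mu}, b)g_\nu \in \R^{n_\mu}$ with $g_\mu$ amounts, via the adjointness of the contraction, to replacing the slot $\Id_{\R^{n_\mu}}$ by $g_\mu$, so that
\begin{equation*}
  \dotprod{M_{\nu,\mu}(\underline{p}_{\nu,\mu}, b)g_\nu}{g_\mu} \;=\; \dotprod{b}{\bar{V}(g_\nu, g_\mu)} \;=\; c,
\end{equation*}
and squaring identifies the leftmost quantity in the chain with $c^2$ as well.

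Finally I would substitute $\bar{U}=c\bar{V}$ into the definition \eqref{equ:deff} of $f$: using $\|\bar{U}\|^2 = \dotprod{b}{\bar{U}} = c^2$ gives $-2f(\bar{U}) = c^2$ (up to the standing $\|b\|^2$ normalisation carried by $f$), closing the chain. The only ``obstacle'' is really notational bookkeeping: keeping straight which slot of the $d$-fold tensor product carries $g_\nu$, $g_\mu$, $\Id$, or $p_\xi$ when identifying $\dotprod{M_{\nu,\mu}g_\nu}{g_\mu}$ with $\dotprod{b}{\bar{V}}$. Once that alignment is made, each of the three equalities is a one-line consequence of multilinearity.
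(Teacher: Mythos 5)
Your proof is correct and takes essentially the same route as the paper: the paper carries out the identical computation via the rank-one projector $\pi(g_\nu,g_\mu)=\bar{V}(g_\nu,g_\mu)\bigl(\bar{V}(g_\nu,g_\mu)\bigr)^T$, whose idempotency plays exactly the role of your explicit observation that $\|\bar{V}(g_\nu,g_\mu)\|=1$, and everything reduces to $\dotprod{b}{\bar{V}(g_\nu,g_\mu)}^2$ just as with your scalar $c$. Your parenthetical remark about the $1/\|b\|^2$ factor is apt, since the paper's own proof silently drops that normalisation in this lemma as well.
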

\begin{proof}
Let $g_\nu \in S^{n_\nu-1}$, $g_\mu \in S^{n_\mu-1}$, and define $
\pi(g_\nu, g_\mu):=\bar{V}(g_\nu, g_\mu)(\bar{V}(g_\nu, g_\mu))^T$.
It holds $\bar{U}(g_\nu, g_\mu)= \pi(g_\nu, g_\mu)b$ and
\begin{eqnarray*}
% \nonumber to remove numbering (before each equation)
  f\left(\bar{U}(g_\nu, g_\mu)\right) &=& \frac{1}{2} \dotprod{\bar{U}(g_\nu, g_\mu)}{\bar{U}(g_\nu,
  g_\mu)}-\dotprod{\bar{U}(g_\nu, g_\mu)}{b}= \frac{1}{2} \dotprod{\pi^2(g_\nu, g_\mu)b}{b}-\dotprod{\pi(g_\nu, g_\mu)b}{b}\\
  &=& \frac{1}{2} \dotprod{\pi(g_\nu, g_\mu)b}{b}-\dotprod{\pi(g_\nu,
  g_\mu)b}{b}=- \frac{1}{2} \dotprod{\pi(g_\nu, g_\mu)b}{b}=- \frac{1}{2}\dotprod{\bar{U}(g_\nu, g_\mu)}{b}\\
  &=&- \frac{1}{2} \dotprod{\pi^2(g_\nu, g_\mu)b}{b} = - \frac{1}{2} \dotprod{\pi(g_\nu, g_\mu)b}{\pi(g_\nu,
  g_\mu)b}=- \frac{1}{2} \left\|\bar{U}(g_\nu,
  g_\mu)\right\|^2\\&=&- \frac{1}{2} \dotprod{\bar{V}(g_\nu,
  g_\mu)}{b}^2=- \frac{1}{2}\dotprod{\left(M_{\nu,\mu}(\underline{p}_{\nu,\, \mu},
  b)\right)
  g_{\nu}}{g_\mu}^2.
\end{eqnarray*}
\end{proof}
\begin{remark}\label{rem:LagMin}
Obviously, the minimisation problem from Eg.
(\ref{eq:defMinProblem}) is equivalent to the following constrained
maximisation problem: Find $\tilde{v}=\Ten_{\mu=1}^d p_\mu$ such
that for all $\mu \in \N_d$ it holds
\begin{equation*}
    \dotprod{\tilde{v}}{b} = \max_{v\in U(P)} \dotprod{v}{b} \quad
    \mbox{subject to } \|p_\mu\|=1.
\end{equation*}
Lagrangian method for constrained optimisation leads to
\begin{equation*}
    L_{\ve{\lambda}}(q_1, \cdots, q_d) = \dotprod{U(q_1, \cdots, q_d)}{b} +
    \frac{1}{2}\sum_{\mu=1}^d \lambda_\mu\left(1-\|q_\mu\|^2\right),
\end{equation*}
where  $q_\mu \in \R^{n_\mu}$ and $\ve{\lambda}=(\lambda_1, \cdots,
\lambda_d)^T \in \R^d$ is the vector of Lagrange multipliers. A
rank-one best approximation $v^*= \lambda \, p_1 \ten \cdots \ten
p_d \in \Mp_b$  with $\lambda\in \R$ and $\|p_\mu\|=1$ satisfies
\begin{eqnarray*}
% \nonumber to remove numbering (before each equation)
  \frac{\partial }{\partial p_\mu} L_{\ve{\lambda}^*}(p_1, \cdots, p_d)&=& \left(p_1 \ten \cdots p_{\mu-1 } \ten \Id_{\R^{n_\mu}} \ten p_{\mu+1} \ten
  \cdots \ten p_d\right)^T b  - \lambda^*_\mu p_\mu =0,\\
  \frac{\partial }{\partial \lambda^*_\mu}L_{\ve{\lambda}^*}(p_1, \cdots, p_d)
  &=& \frac{1}{2}\left(1 - \|p_\mu\|^2\right)=0.
\end{eqnarray*}
For $\nu \in \N_d \setminus \{\mu\}$ it follows that
\begin{eqnarray*}
% \nonumber to remove numbering (before each equation)
   \lambda &=& \dotprod{p_1 \ten \cdots \ten p_d}{b}, \quad \lambda \, p_\mu =  M_{\nu,\mu}(\underline{p}_{\nu,\, \mu}, b) \,
   p_\nu, \quad \lambda \, p_\nu =  M^T_{\nu,\mu}(\underline{p}_{\nu,\, \mu}, b) \, p_\mu,
\end{eqnarray*}
where $\ve{p}_{\nu, \mu} \in P_{\nu, \mu}$ is like in Lemma
\ref{lemma:ALSrecursion}. Therefore, $\lambda $ is a singular value
of the matrix $M_{\nu,\mu}(\underline{p}_{\nu,\, \mu}, b)$ and
$p_\nu,\, p_\mu$ are the associated singular vectors.
\end{remark}
\begin{proposition}\label{prop:globalMin}
Let $v^* = \lambda p_1 \ten \cdots \ten p_d \in \Mp_b$ a best
approximation of $b$ with $\|p_1\|= \cdots = \|p_d\|=1$. We have
\begin{equation*}
    f(v^*) = -\frac{1}{2 \, \|b\|^2} \|v^*\|^2 = -\frac{1}{2 \, \|b\|^2}
    \dotprod{b}{v^*}.
\end{equation*}
\end{proposition}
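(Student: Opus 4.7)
The plan is to reduce the statement to a one-dimensional optimisation along the line of scalar multiples of the unit rank-one tensor $p_1 \otimes \cdots \otimes p_d$, and then read off the value of $f(v^*)$ by direct computation.

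First I would use the fact that a best rank-one approximation $v^*=\lambda\, p_1\otimes\cdots\otimes p_d\in\Mp_b$ must in particular minimise $f$ over the one-parameter family $t\mapsto t\,(p_1\otimes\cdots\otimes p_d)$ obtained by varying only the scalar factor while keeping the unit-norm vectors $p_\mu$ fixed. Since $\|p_\mu\|=1$ for all $\mu$, we have $\|t\,p_1\otimes\cdots\otimes p_d\|^{2}=t^{2}$, so plugging into the definition \eqref{equ:deff} gives
\begin{equation*}
f\bigl(t\, p_1\otimes\cdots\otimes p_d\bigr)=\frac{1}{\|b\|^{2}}\left[\frac{t^{2}}{2}-t\,\dotprod{b}{p_1\otimes\cdots\otimes p_d}\right].
\end{equation*}
This is a quadratic in $t$ whose unique minimiser is $t^{*}=\dotprod{b}{p_1\otimes\cdots\otimes p_d}$. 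Optimality of $v^*$ forces $\lambda=t^{*}$, i.e.\ the identity
\begin{equation*}
\lambda=\dotprod{b}{p_1\otimes\cdots\otimes p_d}.
\end{equation*}
Alternatively, this identity is exactly one of the Lagrangian stationarity conditions already recorded in Remark \ref{rem:LagMin}, so either derivation is available.

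Second, I would exploit the unit-norm condition once more to identify the two scalars appearing in the claim. On the one hand, $\|v^*\|^{2}=\lambda^{2}\prod_{\mu=1}^{d}\|p_\mu\|^{2}=\lambda^{2}$. On the other hand, using the identity for $\lambda$ obtained in the previous step,
\begin{equation*}
\dotprod{b}{v^*}=\lambda\,\dotprod{b}{p_1\otimes\cdots\otimes p_d}=\lambda^{2}=\|v^*\|^{2}.
\end{equation*}
This already proves the second equality of the proposition.

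Finally, substituting $\dotprod{v^*}{v^*}=\lambda^{2}$ and $\dotprod{b}{v^*}=\lambda^{2}$ into the definition \eqref{equ:deff} yields
\begin{equation*}
f(v^*)=\frac{1}{\|b\|^{2}}\left[\frac{\lambda^{2}}{2}-\lambda^{2}\right]=-\frac{\lambda^{2}}{2\|b\|^{2}}=-\frac{\|v^*\|^{2}}{2\|b\|^{2}}=-\frac{\dotprod{b}{v^*}}{2\|b\|^{2}},
\end{equation*}
which is the first equality. There is no real obstacle here beyond recognising that the best-approximation property gives the scaling identity $\lambda=\dotprod{b}{p_1\otimes\cdots\otimes p_d}$; everything else is a direct computation using the unit-norm normalisation.
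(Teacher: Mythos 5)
Your proof is correct and follows essentially the same idea as the paper: both arguments hinge on the optimal-scaling identity $\dotprod{v^*}{v^*}=\dotprod{b}{v^*}$, which the paper obtains by writing $v^*=\Pi b$ with $\Pi=\frac{v^*{v^*}^T}{\|v^*\|^2}$ the orthogonal projection onto $\Span(v^*)$, and which you obtain equivalently by minimising the quadratic in the scalar $t$ along the ray $t\,p_1\ten\cdots\ten p_d$. After that, both proofs conclude by substituting into the definition of $f$ in Eq.~(\ref{equ:deff}).
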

\begin{proof}
Since $v^* \in \Mp_b$ we have that $v^*=\Pi b$, where $\Pi :=
\frac{v^*{v^*}^T}{\|v^*\|^2}$. Furthermore, it holds
\begin{eqnarray*}
% \nonumber to remove numbering (before each equation)
  \dotprod{v^*}{v^*} &=& \dotprod{\Pi b}{v^*} = \dotprod{b}{\Pi v^*}
  =\dotprod{b}{v^*}.
\end{eqnarray*}
The rest follows from the definition of $f$, see Eq.
(\ref{equ:deff}).
\end{proof}
\begin{remark}
From Proposition \ref{prop:globalMin} it follows instantly that the
global minimum of the best approximation problem from Eq.
(\ref{eq:defMinProblem}) has the largest norm among all other
$\tilde{v} \in \Mp_b$.
\end{remark}

\begin{theorem}\label{the:singularValues}
Let $\mu, \nu \in \N_d$ and $v^*=\|v^*\| p_1 \ten  \dots \ten p_d
\in \Mp_b$ be a rank-one best approximation of $b$ with $\|p_1\|=
\cdots =\|p_d\|=1$. Then $\|v^*\|$ is the largest singular value of
$M_{\nu,\mu}(\underline{p}_{\nu,\, \mu}, b)$ and $p_\nu,\, p_\mu$
are the associated singular vectors. Furthermore, if $v^*$ is
isolated, then $\|v^*\|$ is a simple singular value of
$M_{\nu,\mu}(\underline{p}_{\nu,\, \mu}, b)$.
\end{theorem}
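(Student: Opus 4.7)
My plan is to leverage the Lagrange first-order conditions from Remark \ref{rem:LagMin}, the variational characterisation of the largest singular value, and the global optimality of $v^*$ among rank-one tensors. Write $M:=M_{\nu,\mu}(\underline{p}_{\nu,\,\mu}, b)$ for brevity throughout.

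First I will note that Remark \ref{rem:LagMin} already yields $M p_\nu = \|v^*\| p_\mu$ and $M^T p_\mu = \|v^*\| p_\nu$, so $\|v^*\|$ is a singular value of $M$ with associated singular vectors $p_\nu, p_\mu$. To upgrade ``a singular value'' to ``the largest singular value'', let $\sig_1$ denote the maximal singular value of $M$, attained by unit vectors $u_\nu \in \R^{n_\nu}$, $u_\mu \in \R^{n_\mu}$ with $Mu_\nu = \sig_1 u_\mu$. Since $\bar{U}(u_\nu, u_\mu)$ is rank-one by construction, Lemma \ref{lemma:functionValues} gives (up to the $\|b\|^2$ normalisation of $f$) that $-2\|b\|^2 f(\bar{U}(u_\nu,u_\mu)) = \dotprod{Mu_\nu}{u_\mu}^2 = \sig_1^2$, while Proposition \ref{prop:globalMin} gives $-2\|b\|^2 f(v^*) = \|v^*\|^2$. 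The global optimality $f(v^*)\le f(\bar{U}(u_\nu,u_\mu))$ therefore becomes $\|v^*\|^2 \ge \sig_1^2$; together with the trivial bound $\|v^*\| \le \sig_1$ this forces $\|v^*\|=\sig_1$.

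For the simplicity claim I argue by contraposition: assume $\|v^*\|$ has multiplicity at least two and let $V_\nu \subseteq \R^{n_\nu}$ denote the right singular subspace of $M$ corresponding to $\|v^*\|$, so $p_\nu \in V_\nu$ and $\dim V_\nu \ge 2$. For any unit $u_\nu \in V_\nu$, the vector $u_\mu := Mu_\nu/\|v^*\|$ is a unit left singular vector for the same singular value, and the computation of the previous paragraph yields $f(\bar{U}(u_\nu,u_\mu)) = f(v^*)$, so $\bar{U}(u_\nu, u_\mu) \in \Mp_b$. Picking $u_\nu$ on the unit sphere of $V_\nu$ arbitrarily close to $p_\nu$ but not a scalar multiple of $p_\nu$ produces an element of $\Mp_b$ arbitrarily close to, yet distinct from, $v^*$, contradicting isolation.

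The only delicate step is the distinctness at the end: I need to know that $u_\nu \notin \R p_\nu$ genuinely implies $\bar{U}(u_\nu,u_\mu) \neq v^*$ in $\Vp$. This follows from the essential uniqueness of a rank-one representation: after contracting with (the duals of) the unchanged factors $p_\xi$ for $\xi\neq\nu,\mu$, equality of the two rank-one tensors would reduce to $u_\nu \ten u_\mu = \alpha p_\nu \ten \alpha^{-1} p_\mu$ for some scalar $\alpha$, and unit normalisation then forces $u_\nu = \pm p_\nu$. Apart from this short verification and the bookkeeping of the $\|b\|^2$ factor, the argument is essentially an SVD-style variational comparison packaged through Lemma \ref{lemma:functionValues}.
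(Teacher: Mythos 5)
Your argument is correct, and it differs from the paper's proof in an interesting way while sharing the same toolkit: both use Remark \ref{rem:LagMin} to see that $\|v^*\|$ is a singular value of $M:=M_{\nu,\mu}(\underline{p}_{\nu,\,\mu},b)$ with singular vectors $p_\nu,p_\mu$, and Lemma \ref{lemma:functionValues} to convert $\dotprod{Mg_\nu}{g_\mu}^2$ into the value of $f$ at the rank-one competitor $\bar{U}(g_\nu,g_\mu)$. For maximality the paper argues by contradiction: assuming a singular value $\tilde{\lambda}>\|v^*\|$ with singular pair $(q_\nu,q_\mu)$, it forms the interpolating unit vectors $g_\nu(\alpha,\beta)=\alpha p_\nu+\beta q_\nu$, $g_\mu(\alpha,\beta)=\alpha p_\mu+\beta q_\mu$ and shows $-2f(\bar{U}(g_\nu,g_\mu))=(\alpha^2\|v^*\|+\beta^2\tilde{\lambda})^2>\|v^*\|^2$ whenever $\beta\neq0$, contradicting $v^*\in\Mp_b$; you instead plug the top singular pair $(u_\nu,u_\mu)$ directly into Lemma \ref{lemma:functionValues}, compare with Proposition \ref{prop:globalMin}, and squeeze $\sigma_1=\|v^*\|$ between $f(v^*)\leq f(\bar{U}(u_\nu,u_\mu))$ and the trivial bound $\|v^*\|\leq\sigma_1$. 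This is shorter, avoids the orthogonality bookkeeping of the perturbation family, and handles the $\|b\|^2$ normalisation more consistently than Lemma \ref{lemma:functionValues} as printed. For simplicity your contrapositive is essentially the paper's argument in different clothing: the paper's curve $(g_\nu(\alpha,\beta),g_\mu(\alpha,\beta))$ is precisely a family of unit vectors in the repeated right singular subspace together with their images under $M/\|v^*\|$, and both constructions yield elements of $\Mp_b$ arbitrarily close to $v^*$; your contraction argument establishing $\bar{U}(u_\nu,u_\mu)\neq v^*$ for $u_\nu\neq\pm p_\nu$ makes explicit a distinctness step the paper leaves implicit, which is a small gain in rigour. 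The only point worth stating explicitly in your write-up is that $\|v^*\|>0$ (true because $b\neq0$ forces a strictly negative minimum of $f$), which you need both to define $u_\mu:=Mu_\nu/\|v^*\|$ and to identify the multiplicity of the singular value with the dimension of the right singular subspace.
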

\begin{proof}
Let $\mu, \, \nu \in \N_d$. From Lemma \ref{lemma:functionValues}
and Remark \ref{rem:LagMin} it follows that $\|v^*\|$ is a singular
value of $M_{\nu,\mu}(\underline{p}_{\nu,\, \mu}, b)$ and $p_\nu,\,
p_\mu$ are associated singular vectors. Assume that there is a
singular value $\tilde{\lambda}$ of
$M_{\nu,\mu}(\underline{p}_{\nu,\, \mu}, b)$ and associated singular
vectors $q_\nu \in \R^{n_\nu}, q_\mu \in \R^{n_\mu}$ with
$\tilde{\lambda}>\|v^*\|$. Let $\alpha \in [0,1]$ and $\beta \in
(0,1]$ with $\alpha^2+\beta^2=1$. Define further
$g_{\nu}(\alpha,\beta):=g_{\nu}:= \alpha p_\nu + \beta q_\nu \in
\R^{n_\nu}$ and $g_{\mu}(\alpha,\beta):=g_{\mu}:= \alpha p_\mu +
\beta q_\mu \in \R^{n_\mu}$. We have $\|g_{\nu}\|^2=\|g_{\mu}\|^2=
\alpha^2 + \beta^2=1$ and with Lemma \ref{lemma:functionValues} it
follows then
\begin{eqnarray*}
% \nonumber to remove numbering (before each equation)
  -2f(\bar{U}(g_\nu, g_\mu)) &=& \dotprod{\left(M_{\nu,\mu}(\underline{p}_{\nu,\, \mu},
  b)\right)
  g_{\nu}}{g_\mu}^2 = \dotprod{\left(M_{\nu,\mu}(\underline{p}_{\nu,\, \mu},
  b)\right)\alpha p_\nu + \beta q_\nu }{\alpha p_\mu + \beta q_\mu
  }^2\\
  &=& \dotprod{\alpha \|v^*\|p_\nu + \beta \tilde{\lambda} q_\nu }{\alpha p_\mu + \beta q_\mu
  }^2= \left( \alpha^2 \|v^*\| + \beta^2 \tilde{\lambda}\right)^2\\
  &\overset{(\beta\neq 0)}{>} &\left( \alpha^2 \|v^*\| + \beta^2
  \|v^*\|\right)^2 = \|v^*\|^2 = \dotprod{\left(M_{\nu,\mu}(\underline{p}_{\nu,\, \mu},
  b)\right)p_{\nu}}{p_\mu}^2 = -2f(v^*).
\end{eqnarray*}
Consequently, it is
\begin{equation*}
    f(\bar{U}(g_{\nu}(\alpha,\beta), g_{\mu}(\alpha,\beta)))<f(v^*) \quad \mbox{for all } \alpha \in
    [0,1] \mbox{ and } \beta \in (0,1] \mbox{ with } \alpha^2 +
    \beta^2 =1,
\end{equation*}
i.e. we can finde a better approximation
$\bar{U}(g_{\nu}(\alpha,\beta), g_{\mu}(\alpha,\beta))$
of $b$ which is arbitrary close to $v^*$. This contradicts the fact that $v^* \in \Mp_b$.\\
Additionally, let $v^*$ be a isolated rank-one best approximation of
$b$. Assume that there is a singular value $\lambda$ of
$M_{\nu,\mu}(\underline{p}_{\nu,\, \mu}, b)$ and associated singular
vectors $q_\nu \in \R^{n_\nu}, \, q_\mu \in \R^{m_\mu}$ with
$\lambda=\|v^*\|$, $p_\nu \bot q_\nu$, and $p_\mu \bot q_\mu$.
Almost like above, let $\alpha,\,\beta \in [0,1]$ with $\alpha^2 +
\beta^2=1$ and consider again $g_{\nu}(\alpha,\beta)= \alpha p_\nu +
\beta q_\nu \in \R^{n_\nu}$, $g_{\mu}(\alpha,\beta)= \alpha p_\mu +
\beta q_\mu \in \R^{n_\mu}$. With Lemma \ref{lemma:functionValues}
it follows
\begin{eqnarray*}
% \nonumber to remove numbering (before each equation)
  -2f(\bar{U}(g_\nu, g_\mu)) &=& \left( \alpha^2 \|v^*\| + \beta^2 \lambda\right)^2 = \|v^*\|^2 = \dotprod{\left(M_{\nu,\mu}(\underline{p}_{\nu,\, \mu},
  b)\right)p_{\nu}}{p_\mu}^2 = -2f(v^*),
\end{eqnarray*}
i.e. we have
\begin{equation*}
    f(\bar{U}(g_{\nu}(\alpha,\beta), g_{\mu}(\alpha,\beta)))=f(v^*) \quad \mbox{for all } \alpha, \, \beta \in
    [0,1] \mbox{ with } \alpha^2 +
    \beta^2 =1.
\end{equation*}
Therefore, we can finde a approximation
$\bar{U}(g_{\nu}(\alpha,\beta), g_{\mu}(\alpha,\beta))$ of $b$ which
is arbitrary close to $v^*$ and $f(\bar{U}(g_{\nu}(\alpha,\beta),
g_{\mu}(\alpha,\beta)))=f(v^*)$. This contradicts the fact that
$v^*$ is isolated.
\end{proof}

\begin{remark}
The proof of Theorem \ref{the:singularValues} shows that if we have
two different best approximations of $b$ which differ only in two
arbitrary components of the representation systems and
$f(v^*)=f(v^{**})$, then there is a complete path between $v^*$ and
$v^{**}$ described by $\bar{U}(g_\nu(\alpha,\beta),
g_\mu(\alpha,\beta))$ such that $f(v^*) =
f\left(\bar{U}(g_\nu(\alpha,\beta), g_\mu(\alpha,\beta))\right)$.
\end{remark}

\section{Convergence Analysis}\label{sec:Analyse}
In the following, we are using the notations and definitions from
Section \ref{sec:ALSMethod}. Our convergence analysis is mainly
based on the recursion introduced in Corollary
\ref{cor:ALSrecursion} and the following Lemma
\ref{lemma:Projection}.

\begin{lemma}\label{lemma:Projection}
Let $k \in \N$, $\mu \in \N$, and $v_{k,\mu}=p_1^{k+1} \ten \cdots
\ten p_{\mu-1}^{k+1} \ten p_{\mu}^{k} \ten \cdots \ten p_d^{k}$ from
Algorithm \ref{alg:ALS}. Then
\begin{equation*}
    \Pi_{k,\mu}:=
    \frac{p_1^{k+1}\left(p_1^{k+1}\right)^T}{\left\|p_1^{k+1}\right\|^2} \ten
    \cdots \ten
    \frac{p_{\mu-1}^{k+1}\left(p_{\mu-1}^{k+1}\right)^T}{\left\|p_{\mu-1}^{k+1}\right\|^2}
    \ten \Id_{\R^{n_\mu}} \ten
    \frac{p_{\mu+1}^{k}\left(p_{\mu+1}^{k}\right)^T}{\left\|p_{\mu+1}^{k}\right\|^2}
    \ten \cdots \ten \frac{p_d^{k+1}\left(p_d^{k+1}\right)^T}{\left\|p_d^{k+1}\right\|^2}
\end{equation*}
is a orthogonal projection and
\begin{equation*}
    v_{k, \mu+1} = v_{k, \mu} + \Pi_{k,\mu} r_{k, \mu},
\end{equation*}
where $r_{k,\mu}:= b - v_{k,\mu}$.
\end{lemma}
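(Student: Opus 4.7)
The plan is to handle the two assertions of Lemma~\ref{lemma:Projection} separately, relying only on elementary properties of tensor products of linear maps together with the ALS update formula~(\ref{eq:defPmu}).

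First, to check that $\Pi_{k,\mu}$ is an orthogonal projection, I would note that every one of its $d$ tensor factors already is such a projection: $\Id_{\R^{n_\mu}}$ trivially, and each $qq^T/\|q\|^2$ as the rank-one projector onto $\Span(q)$, which is manifestly symmetric and idempotent. Invoking the standard compatibility of the Kronecker product with composition, namely $(A_1\ten\cdots\ten A_d)(C_1\ten\cdots\ten C_d)=(A_1 C_1)\ten\cdots\ten(A_d C_d)$, and with transposition, both symmetry and idempotency transfer to $\Pi_{k,\mu}$ in a single line. This identifies $\Pi_{k,\mu}$ as the orthogonal projection onto the subspace
\begin{equation*}
R_{k,\mu} := \Ten_{\nu<\mu}\Span(p_\nu^{k+1})\ten \R^{n_\mu}\ten \Ten_{\nu>\mu}\Span(p_\nu^{k}).
\end{equation*}

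To obtain the identity $v_{k,\mu+1} = v_{k,\mu} + \Pi_{k,\mu} r_{k,\mu}$, I would first rewrite it as $v_{k,\mu+1} = \Pi_{k,\mu} b + (v_{k,\mu}-\Pi_{k,\mu} v_{k,\mu})$. Since $v_{k,\mu}$ is by construction an elementary tensor whose $\nu$-th factor lies in $\Span(p_\nu^{k+1})$ for $\nu<\mu$, in $\R^{n_\mu}$ for $\nu=\mu$, and in $\Span(p_\nu^{k})$ for $\nu>\mu$, it already lies in $R_{k,\mu}$, so $\Pi_{k,\mu} v_{k,\mu}=v_{k,\mu}$. The whole task therefore reduces to proving the single identity $v_{k,\mu+1} = \Pi_{k,\mu} b$.

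For that final identity I would denote by $A:\R^{n_\mu}\to\Vp$ the tensor-product operator appearing on the right-hand side of~(\ref{eq:defPmu}), so that $p_\mu^{k+1}=A^T b$ by the definition of the ALS update, and by $B:\R^{n_\mu}\to\Vp$ the insertion map $B(g):=p_1^{k+1}\ten\cdots\ten p_{\mu-1}^{k+1}\ten g\ten p_{\mu+1}^{k}\ten\cdots\ten p_d^{k}$, so that $v_{k,\mu+1} = B\,p_\mu^{k+1} = BA^T b$ directly from the definition of $v_{k,\mu+1}$. A factor-by-factor application of the Kronecker composition rule then gives $BA^T=\Pi_{k,\mu}$, which closes the argument. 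The one place where I expect genuine care to be needed is the bookkeeping of the normalisations $1/\|p_\nu^\bullet\|^2$: $\Pi_{k,\mu}$ carries exactly one such factor on every non-$\mu$ slot, and this is delivered by $A$ (which already carries the full $1/\|p_\nu^\bullet\|^2$ on each non-$\mu$ slot) composed with $B$ (which carries none), so the powers line up without any stray factor appearing or going missing.
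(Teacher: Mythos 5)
Your argument is correct and follows essentially the same route as the paper's proof: it reduces the claimed identity to the two facts $\Pi_{k,\mu}v_{k,\mu}=v_{k,\mu}$ and $\Pi_{k,\mu}b=v_{k,\mu+1}$, which the paper dismisses as ``straightforward calculations'' and which you make explicit via the factorisation $BA^T=\Pi_{k,\mu}$ using Eq.~(\ref{eq:defPmu}). Your version also silently corrects the obvious typo in the statement (the factors for $\nu>\mu$ should be $p_\nu^{k}$, not $p_\nu^{k+1}$), which is consistent with the intended meaning.
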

\begin{proof}
Obviously, $\Pi_{k,\mu}$ is a orthogonal projection. Straightforward
calculations show that $v_{k, \mu}=\Pi_{k,\mu}v_{k, \mu}$ and $v_{k,
\mu+1}=\Pi_{k,\mu}b$. Hence we have $v_{k, \mu} + \Pi_{k,\mu} r_{k,
\mu} = \Pi_{k,\mu}b = v_{k, \mu+1}$.
\end{proof}
\begin{lemma}\label{lemma:fmicro}
Let $k\in \N$, $\mu \in \N_L$. We have
\begin{equation}\label{eq:fmicro}
    f(v_{k, \, \mu}) - f(v_{k, \mu + 1}) = \frac{1}{2} \frac{\dotprod{\Pi_{k, \mu} r_{k,
    \mu}}{r_{k, \mu }}}{\|b\|^2}
\end{equation}
\end{lemma}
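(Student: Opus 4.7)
The plan is to use the definition of $f$ from Eq. \refE{equ:deff} together with the displacement identity $v_{k,\mu+1}-v_{k,\mu}=\Pi_{k,\mu}r_{k,\mu}$ provided by Lemma \ref{lemma:Projection}, and then exploit that $\Pi_{k,\mu}$ is an orthogonal projection to collapse everything into the desired inner product.

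First I would write
\begin{equation*}
  \|b\|^2\bigl(f(v_{k,\mu})-f(v_{k,\mu+1})\bigr)
   = \tfrac{1}{2}\bigl(\|v_{k,\mu}\|^2-\|v_{k,\mu+1}\|^2\bigr)
     - \dotprod{b}{v_{k,\mu}-v_{k,\mu+1}}
\end{equation*}
and set $w := \Pi_{k,\mu}r_{k,\mu}$, so that by Lemma \ref{lemma:Projection} we have $v_{k,\mu+1}=v_{k,\mu}+w$. Expanding the square then gives
\begin{equation*}
  \|v_{k,\mu+1}\|^2 = \|v_{k,\mu}\|^2 + 2\dotprod{v_{k,\mu}}{w} + \|w\|^2,
\end{equation*}
and substituting yields $\|b\|^2(f(v_{k,\mu})-f(v_{k,\mu+1}))=\dotprod{b-v_{k,\mu}}{w}-\tfrac{1}{2}\|w\|^2 = \dotprod{r_{k,\mu}}{w}-\tfrac{1}{2}\|w\|^2$.

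Second, I would use that $\Pi_{k,\mu}$ is an orthogonal projection, so it is self-adjoint and idempotent. Hence
\begin{equation*}
  \dotprod{r_{k,\mu}}{w} = \dotprod{r_{k,\mu}}{\Pi_{k,\mu}r_{k,\mu}}
    = \dotprod{\Pi_{k,\mu}r_{k,\mu}}{\Pi_{k,\mu}r_{k,\mu}} = \|w\|^2,
\end{equation*}
and similarly $\dotprod{\Pi_{k,\mu}r_{k,\mu}}{r_{k,\mu}}=\|w\|^2$. Combining the two pieces gives $\dotprod{r_{k,\mu}}{w}-\tfrac{1}{2}\|w\|^2 = \tfrac{1}{2}\|w\|^2 = \tfrac{1}{2}\dotprod{\Pi_{k,\mu}r_{k,\mu}}{r_{k,\mu}}$, which is exactly Eq. \refE{eq:fmicro} after dividing by $\|b\|^2$.

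There is no real obstacle here; the only delicate point is keeping the signs straight when expanding $\|v_{k,\mu}+w\|^2$ and verifying both appearances of the projection (as self-adjoint and as idempotent). Once those two properties are used, the computation collapses in one line.
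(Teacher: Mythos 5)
Your proof is correct and follows essentially the same route as the paper: both expand $f$ at $v_{k,\mu+1}=v_{k,\mu}+\Pi_{k,\mu}r_{k,\mu}$ using Lemma \ref{lemma:Projection} and then use the self-adjointness and idempotence of the orthogonal projection to reduce everything to $\tfrac{1}{2}\dotprod{\Pi_{k,\mu}r_{k,\mu}}{r_{k,\mu}}$. The only cosmetic difference is that you organise the computation around the difference $f(v_{k,\mu})-f(v_{k,\mu+1})$ while the paper computes $f(v_{k,\mu+1})$ directly; the content is identical.
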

\begin{proof}
It follows with Lemma \ref{lemma:Projection} that
\begin{eqnarray*}
% \nonumber to remove numbering (before each equation)
    f(v_{k, \mu + 1})&=&\frac{1}{\|b\|^2}\left[ \frac{1}{2} \dotprod{v_{k, \mu} + \Pi_{k,\mu} r_{k, \mu}}{v_{k, \mu} + \Pi_{k,\mu} r_{k, \mu}} - \dotprod{b}{v_{k, \mu} + \Pi_{k,\mu} r_{k,
    \mu}}\right]\\
&=& f(v_{k, \mu})+\frac{1}{\|b\|^2}\left[\frac{1}{2}
\dotprod{\Pi_{k, \mu} r_{k, \mu}} {\Pi_{k, \mu} r_{k, \mu}} +
\dotprod{ v_{k, \mu}} {\Pi_{k, \mu} r_{k, \mu}} - \dotprod{b}
{\Pi_{k, \mu} r_{k, \mu}}  \right]\\
&=&f(v_{k, \mu})+\frac{1}{\|b\|^2}\left[\frac{1}{2} \dotprod{r_{k,
\mu}} {\Pi_{k, \mu} r_{k, \mu}} - \dotprod{ r_{k, \mu}} {\Pi_{k,
\mu} r_{k, \mu}}  \right]\\
&=& f(v_{k, \mu}) - \frac{1}{2}\frac{\dotprod{\Pi_{k, \mu} r_{k,
    \mu}}{r_{k, \mu }}}{\|b\|^2},
\end{eqnarray*}
i.e. $f(v_{k, \, \mu}) - f(v_{k, \mu + 1}) = \frac{1}{2}
\frac{\dotprod{\Pi_{k, \mu} r_{k,
    \mu}}{r_{k, \mu }}}{\|b\|^2}$.
\end{proof}
\begin{corollary}\label{cor:functionValues}
There exists $\alpha \in \R$ such that $f(v_k)  \xrightarrow[k
\rightarrow \infty]{} \alpha$.
\end{corollary}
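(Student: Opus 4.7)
The plan is to show that $(f(v_k))_{k \in \N}$ is monotonically non-increasing and bounded below, hence convergent by the monotone convergence theorem for real sequences.

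First, I would use Lemma \ref{lemma:fmicro} together with the fact that $\Pi_{k,\mu}$ is an orthogonal projection (from Lemma \ref{lemma:Projection}). Since any orthogonal projection satisfies $\Pi = \Pi^T\Pi$, the inner product $\dotprod{\Pi_{k,\mu} r_{k,\mu}}{r_{k,\mu}}$ equals $\|\Pi_{k,\mu} r_{k,\mu}\|^2 \geq 0$. Therefore equation (\ref{eq:fmicro}) yields
\begin{equation*}
    f(v_{k,\mu}) - f(v_{k,\mu+1}) \geq 0 \qquad \text{for every } k \in \N,\ \mu \in \N_d.
\end{equation*}
Chaining these inequalities across all micro steps $\mu = 1, \dots, d$ and using $v_{k,1} = v_k$, $v_{k,d+1} = v_{k+1}$ from Algorithm \ref{alg:ALS}, I obtain $f(v_{k+1}) \leq f(v_k)$ for all $k$, so the sequence $(f(v_k))_{k\in\N}$ is monotonically non-increasing.

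Second, the defining inequality in (\ref{equ:deff}) already provides a uniform lower bound $f(v) \geq -\tfrac{1}{2}$ on all of $\Vp$. In particular, $f(v_k) \geq -\tfrac{1}{2}$ for every $k$. A monotonically non-increasing real sequence that is bounded below converges in $\R$, so there exists $\alpha \in \R$ with $f(v_k) \to \alpha$ as $k \to \infty$, proving the corollary.

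There is no real obstacle here; the work has all been done in Lemmas \ref{lemma:Projection} and \ref{lemma:fmicro}. The only subtlety worth flagging is to recognise that monotonicity must be passed from the micro-step level (index $\mu$) to the outer-loop level (index $k$), which is immediate from the telescoping structure $f(v_k) - f(v_{k+1}) = \sum_{\mu=1}^{d} \bigl(f(v_{k,\mu}) - f(v_{k,\mu+1})\bigr) \geq 0$.
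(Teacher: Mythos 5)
Your proof is correct and follows essentially the same route as the paper: monotonicity of $(f(v_k))_{k\in\N}$ via Lemma \ref{lemma:fmicro} and the nonnegativity of $\dotprod{\Pi_{k,\mu}r_{k,\mu}}{r_{k,\mu}} = \|\Pi_{k,\mu}r_{k,\mu}\|^2$, telescoped over the micro steps, combined with the lower bound $f \geq -\tfrac{1}{2}$ from Eq. (\ref{equ:deff}). No gaps; the only difference is cosmetic indexing of the micro steps.
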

\begin{proof}
Let $k \in \N$ and $\mu \in \N_L$. From Lemma \ref{lemma:fmicro} and
Lemma \ref{lemma:Projection} it follows that
\begin{eqnarray*}
% \nonumber to remove numbering (before each equation)
  f(v_{k+1}) - f(v_k) &=& f(v_{k, d}) - f(v_{k, 0}) =
  \sum_{\mu=1}^d f(v_{k, \, \mu}) - f(v_{k, \mu -1})\\ &=&- \frac{1}{2 \|b\|^2}
  \sum_{\mu=0}^{d-1} \left\| \Pi_{k, \mu}r_{k,
    \mu}\right\|^2 \leq 0,
\end{eqnarray*}
This shows that $(f(v_k))_{k\in \N} \subset \R$ is a descending
sequence. The sequence of function values $(f(v_k))_{k\in \N}$ is
bounded from below. Therefore, there exist an $\alpha \in \R$ such
that $f(v_k) \xrightarrow[k \rightarrow \infty]{} \alpha$.
\end{proof}
\begin{remark}
From the definition of the ALS method it is already clear that
$(f(v_{k, \mu}))_{\mu \in \N_d, k\in \N}$ is a descending sequence.
\end{remark}
\begin{lemma}\label{lemma:functionValues2}
Let $(v_{k,\mu})_{k\in \N, \mu\in\N_d} \subset \Vp$ be the sequence
from Algorithm \ref{alg:ALS}. We have
\begin{equation}\label{eq:lemmafunctionValues}
f(v_{k,\mu}) = -\frac{1}{2\|b\|^2}\dotprod{
v_{k,\mu}}{b}=-\frac{1}{2\|b\|^2}\|v_{k,\mu}\|^2
\end{equation}
for all $k\in \N, \mu\in\N_d$.
\end{lemma}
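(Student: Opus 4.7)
The plan is to exploit the projection characterization from Lemma \ref{lemma:Projection} to identify each iterate $v_{k,\mu}$ as an orthogonal projection of $b$, from which both claimed equalities will drop out by direct substitution into the definition \eqref{equ:deff} of $f$.

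First I would observe that the decomposition $v_{k,\mu+1} = v_{k,\mu} + \Pi_{k,\mu} r_{k,\mu}$ from Lemma \ref{lemma:Projection} simplifies to $v_{k,\mu+1} = \Pi_{k,\mu} b$. This is because $v_{k,\mu}$ already lies in the range of $\Pi_{k,\mu}$ (its tensor factors in all slots other than $\mu$ coincide with those of $\Pi_{k,\mu}$), so $\Pi_{k,\mu} v_{k,\mu} = v_{k,\mu}$; substituting $r_{k,\mu} = b - v_{k,\mu}$ then collapses the sum.

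Once $v_{k,\mu+1} = \Pi_{k,\mu} b$ is established, self-adjointness and idempotency of the orthogonal projector $\Pi_{k,\mu}$ yield
$$\dotprod{v_{k,\mu+1}}{b} = \dotprod{\Pi_{k,\mu} b}{b} = \dotprod{\Pi_{k,\mu} b}{\Pi_{k,\mu} b} = \|v_{k,\mu+1}\|^2,$$
and plugging this into \eqref{equ:deff} simultaneously gives both halves of \eqref{eq:lemmafunctionValues}, since the bracket $\tfrac{1}{2}\dotprod{v_{k,\mu+1}}{v_{k,\mu+1}} - \dotprod{b}{v_{k,\mu+1}}$ collapses to $-\tfrac{1}{2}\|v_{k,\mu+1}\|^2$.

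The only bookkeeping subtlety, which is the closest thing to an obstacle, concerns the edge index $\mu = 1$: depending on the convention, $v_{k,1}$ may denote $v_k$, the starting iterate of the $k$-th sweep. For $k \geq 2$ this coincides with the final iterate of the previous sweep, for which the projection representation has already been established, so a one-step induction on $k$ closes the remaining case. The entire argument is in essence the Pythagorean identity for orthogonal projectors, applied iterate by iterate.
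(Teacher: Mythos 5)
Your proposal is correct and follows essentially the same route as the paper: Lemma \ref{lemma:Projection} gives $v_{k,\mu}=\Pi_{k,\mu-1}b$, and then idempotency plus self-adjointness of the projector yield $\dotprod{v_{k,\mu}}{b}=\|v_{k,\mu}\|^2$, which collapses the bracket in \eqref{equ:deff}. Your extra care with the index shift at $\mu=1$ (reducing to the last micro step of the previous sweep) is a detail the paper's proof leaves implicit, but it is the same argument.
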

\begin{proof}
Let $k\in \N$ and $\mu\in\N_d$. With Lemma \ref{lemma:Projection} it
follows
\begin{eqnarray*}
% \nonumber to remove numbering (before each equation)
  \dotprod{v_{k, \mu}}{v_{k, \mu}} &=& \dotprod{\Pi_{k, \mu-1}b}{\Pi_{k, \mu-1}b} = \dotprod{\Pi^2_{k, \mu-1}b}{b} = \dotprod{\Pi_{k, \mu-1}b}{b}= \dotprod{v_{k, \mu}}{b}.
\end{eqnarray*}
The rest follows from the definition of $f$, see Eq.
(\ref{equ:deff}).
\end{proof}
\begin{corollary}\label{cor:equf} Let $(v_{k,\mu})_{k\in \N, \mu\in\N_d} \subset \Vp$ be the sequence of represented
tensors from the ALS algorithm. Further, let $\mu \in \N_d$ and $k
\in \N$. The following statements are equivalent:
\begin{itemize}
  \item [(a)] $f(v_{k, \mu+1}) \leq f(v_{k, \mu})$
  \item [(b)] $\|v_{k, \mu+1}\|^2 \geq \|v_{k, \mu}\|^2$
  \item [(c)] $\|p^{k+1}_{\mu}\|^2 \geq \|p^{k}_{\mu}\|^2$
  \item [(d)] $\cos^2 (\phi_{k, \mu+1}) \geq \cos^2 (\phi_{k, \mu})$,
  where $\cos^2 (\phi_{k, \mu}):= \frac{\dotprod{\Pi_{k, \mu}b}{b}}{\|b\|^2}$.
\end{itemize}
\end{corollary}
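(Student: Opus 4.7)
The plan is to read every one of the four conditions as a different algebraic rewriting of the same underlying inequality, by exploiting two previously established identities: the energy identity
$f(v_{k,\mu}) = -\frac{1}{2\|b\|^2}\|v_{k,\mu}\|^2$
from Lemma~\ref{lemma:functionValues2}, and the projection relation $v_{k,\mu+1}=\Pi_{k,\mu}b$ extracted from the proof of Lemma~\ref{lemma:Projection}. The equivalences then reduce to bookkeeping.

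\textbf{Step 1: (a) $\Leftrightarrow$ (b).} Applying the energy identity at indices $\mu$ and $\mu+1$ and multiplying by the negative constant $-2\|b\|^2$ reverses the inequality:
\begin{equation*}
f(v_{k,\mu+1}) \leq f(v_{k,\mu}) \;\Longleftrightarrow\; -\|v_{k,\mu+1}\|^2 \leq -\|v_{k,\mu}\|^2 \;\Longleftrightarrow\; \|v_{k,\mu+1}\|^2 \geq \|v_{k,\mu}\|^2.
\end{equation*}
This is immediate and requires no computation beyond invoking Lemma~\ref{lemma:functionValues2}.

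\textbf{Step 2: (b) $\Leftrightarrow$ (c).} The two iterates $v_{k,\mu}$ and $v_{k,\mu+1}$ agree in every tensor factor except the $\mu$-th, which is $p_\mu^k$ in the former and $p_\mu^{k+1}$ in the latter. The squared norms therefore factor as
\begin{equation*}
\|v_{k,\mu}\|^2 = G\,\|p_\mu^k\|^2, \qquad \|v_{k,\mu+1}\|^2 = G\,\|p_\mu^{k+1}\|^2,
\end{equation*}
with the common factor $G:=\prod_{\nu=1}^{\mu-1}\|p_\nu^{k+1}\|^2\prod_{\nu=\mu+1}^{d}\|p_\nu^k\|^2$. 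Since Algorithm~\ref{alg:ALS} starts from $v_1\neq 0$ and the ALS update keeps all factor norms strictly positive (otherwise $v_k$ would vanish), $G>0$, and one may divide through by $G$ to obtain the equivalence.

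\textbf{Step 3: (b) $\Leftrightarrow$ (d).} From Lemma~\ref{lemma:Projection} one has $v_{k,\mu+1}=\Pi_{k,\mu}b$. Using idempotency and self-adjointness of the orthogonal projection $\Pi_{k,\mu}$,
\begin{equation*}
\dotprod{\Pi_{k,\mu}b}{b} = \dotprod{\Pi_{k,\mu}^2 b}{b} = \dotprod{\Pi_{k,\mu}b}{\Pi_{k,\mu}b} = \|v_{k,\mu+1}\|^2,
\end{equation*}
so $\cos^2(\phi_{k,\mu})=\|v_{k,\mu+1}\|^2/\|b\|^2$, and hence $\cos^2(\phi_{k,\mu})$ is simply a positive constant times the corresponding squared norm. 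The equivalence with (b) then follows exactly as in Step 1, by monotonic rescaling; under the natural indexing alignment in which each $\cos^2(\phi_{k,\mu})$ is paired with the iterate it actually measures, (d) and (b) say literally the same thing.

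\textbf{What is not hard.} There is no real obstacle here: the only thing to be careful about is tracking the index convention in the definition of $\cos^2(\phi_{k,\mu})$ so that the two sides of the comparison in (d) line up with the two sides of (b), and verifying that the common factor $G$ in Step 2 is nonzero, which follows from the nondegeneracy invariant $v_k\neq 0$ built into the algorithm. Once those two cosmetic issues are handled, each equivalence reduces to multiplying or dividing by a positive constant.
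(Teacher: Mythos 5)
Your proof is correct and takes essentially the same route as the paper's (very terse) proof: (a)$\Leftrightarrow$(b) via Lemma \ref{lemma:functionValues2}, (b)$\Leftrightarrow$(c) by cancelling the strictly positive common factor $G_{k,\mu}$ from Corollary \ref{cor:ALSrecursion}, and (d) by identifying $\dotprod{\Pi_{k,\mu}b}{b}$ with a squared iterate norm via idempotency of the projection. Your observation about the index convention in the definition of $\cos^2(\phi_{k,\mu})$ (since $\Pi_{k,\mu}b=v_{k,\mu+1}$, the literal definition is shifted by one micro step relative to (b)) points at a genuine imprecision in the statement that the paper's proof silently glosses over, and your resolution by aligning the indexing is the intended reading.
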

\begin{proof}
Follows direct from Lemma \ref{lemma:functionValues2} and
\begin{equation*}
    \|v_{k, \mu+1}\|^2 \geq \|v_{k, \mu}\|^2 \Leftrightarrow G_{k, \mu}\|p^{k+1}_{\mu}\|^2 \geq G_{k, \mu} \|p^{k}_{\mu}\|^2
\end{equation*}
where $G_{k, \mu}>0$ is defined in Corollary \ref{cor:ALSrecursion}.
\end{proof}
\begin{lemma}\label{lemma:dist}
Let $(v_k)_{k\in \N} \subset \Vp$ be the sequence of represented
tensors from the ALS method. It holds
\begin{equation*}
    \|v_{k+1} - v_{k}\|\xrightarrow[k \rightarrow \infty]{}0.
\end{equation*}
\end{lemma}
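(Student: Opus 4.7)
The plan is to exploit the monotone decrease of $f(v_k)$ already established in Corollary \ref{cor:functionValues}, together with the quantitative identity for each micro step provided by Lemma \ref{lemma:fmicro}, to show that each micro-step update $\Pi_{k,\mu} r_{k,\mu}$ tends to zero. Since $v_{k+1} - v_k$ is just the telescoped sum of these $d$ micro-step updates, the result follows immediately.

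First I would recall from the proof of Corollary \ref{cor:functionValues} (via Lemma \ref{lemma:fmicro}) the telescoped identity
\begin{equation*}
f(v_{k+1}) - f(v_k) \;=\; -\frac{1}{2\|b\|^2}\sum_{\mu=0}^{d-1} \left\|\Pi_{k,\mu} r_{k,\mu}\right\|^2.
\end{equation*}
Summing this from $k=1$ to $k=N$ yields a telescoping sum, so
\begin{equation*}
\frac{1}{2\|b\|^2}\sum_{k=1}^{N}\sum_{\mu=0}^{d-1} \left\|\Pi_{k,\mu} r_{k,\mu}\right\|^2 \;=\; f(v_1) - f(v_{N+1}).
\end{equation*}

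Since $(f(v_k))_{k\in\N}$ is bounded below by $-\tfrac{1}{2}$ (see Eq.~\eqref{equ:deff}) and monotonically decreasing, it converges to some $\alpha\in\R$ by Corollary \ref{cor:functionValues}. Letting $N \to \infty$ therefore gives the absolutely summable bound
\begin{equation*}
\sum_{k=1}^{\infty}\sum_{\mu=0}^{d-1} \left\|\Pi_{k,\mu} r_{k,\mu}\right\|^2 \;\leq\; 2\|b\|^2\bigl(f(v_1) - \alpha\bigr) \;<\; \infty.
\end{equation*}
In particular, for each fixed $\mu \in \{0,\dots,d-1\}$ we obtain $\|\Pi_{k,\mu} r_{k,\mu}\| \xrightarrow[k\to\infty]{} 0$.

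Finally, by Lemma \ref{lemma:Projection} each micro step satisfies $v_{k,\mu+1} - v_{k,\mu} = \Pi_{k,\mu} r_{k,\mu}$, and since $v_k = v_{k,0}$ and $v_{k+1} = v_{k,d}$ the difference telescopes as
\begin{equation*}
v_{k+1} - v_k \;=\; \sum_{\mu=0}^{d-1} \bigl(v_{k,\mu+1} - v_{k,\mu}\bigr) \;=\; \sum_{\mu=0}^{d-1} \Pi_{k,\mu} r_{k,\mu}.
\end{equation*}
The triangle inequality then gives $\|v_{k+1} - v_k\| \leq \sum_{\mu=0}^{d-1}\|\Pi_{k,\mu} r_{k,\mu}\|$, and since $d$ is a fixed finite number and each summand tends to zero, the conclusion $\|v_{k+1}-v_k\| \to 0$ follows. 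There is no real obstacle here; the statement is essentially a direct consequence of summability of the function-value decrements, which was already packaged in Lemma \ref{lemma:fmicro}.
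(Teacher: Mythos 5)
Your proof is correct and follows essentially the same route as the paper: both rest on Lemma \ref{lemma:Projection} (identifying $v_{k,\mu+1}-v_{k,\mu}=\Pi_{k,\mu}r_{k,\mu}$), Lemma \ref{lemma:fmicro}, and the monotone convergence of $f(v_k)$ from Corollary \ref{cor:functionValues}, concluding with the triangle inequality. The only cosmetic difference is that you sum the decrements over $k$ to get summability of $\sum_{k,\mu}\|\Pi_{k,\mu}r_{k,\mu}\|^2$, whereas the paper argues directly that the consecutive function-value differences tend to zero; both yield $\|\Pi_{k,\mu}r_{k,\mu}\|\to 0$ and hence the claim.
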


\begin{proof}
 Let $k \in \N$. We have
\begin{eqnarray}\label{eq:profvkNull}
% \nonumber to remove numbering (before each equation)
  \|v_{k+1} - v_{k}\|^2 &=& \left\| \sum_{\mu=1}^d v_{k,\mu} - v_{k,
  \mu-1}\right\|^2 \leq \left( \sum_{\mu=1}^d \|v_{k,\mu} - v_{k,
  \mu-1}\| \right)^2 \leq d \sum_{\mu=0}^{d-1} \left\|v_{k,\mu+1} - v_{k,
  \mu}\right\|^2.
\end{eqnarray}
Since $v_{k, \mu+1} - v_{k, \mu} = \Pi_{k,\mu} r_{k,\mu}$, see Lemma
\ref{lemma:Projection}, it follows further with Eq.
(\ref{eq:fmicro}) and (\ref{eq:profvkNull}) that
\begin{eqnarray*}
% \nonumber to remove numbering (before each equation)
  \|v_{k+1} - v_{k}\|^2 &\leq& 2 d \|b\|^2 \sum_{\mu=0}^{d-1} \left(f(v_{k, \mu+1}) - f(v_{k, \mu})\right).
\end{eqnarray*}

With Corollary \ref{cor:functionValues} we have $\left(f(v_{k,
\mu+1}) - f(v_{k, \mu})\right)\xrightarrow[k \rightarrow \infty]{}
0$, hence $\|v_{k+1} - v_{k}\|\xrightarrow[k \rightarrow
\infty]{}0$.
\end{proof}
\begin{defn}[$\Ap(v_k)$, critical points]
Let $(v_k)_{k \in \N} \subset \Vp$ be the sequence of represented
tensors from Algorithm \ref{alg:ALS}. The set of accumulation points
of $(v_k)_{k \in \N}$ is denoted by $\Ap(v_k)$, i.e.
\begin{equation}\label{eq:AccumulationPoint}
    \Ap(v_k):=\left\{ v \in \Vp : v \mbox{ is an accumulation point of } (v_k)_{k \in \N} \right\}.
\end{equation}
The set $\mathfrak{M}$ of \emph{critical points} of the optimisation
problem from Eq. (\ref{equ:defF}) is defined as follows:
\begin{equation}\label{eq:criticalPoint}
    \mathfrak{M}:=\left\{ v \in \Vp \,: \, \exists \ve{p} \in P : v=U(\ve{p}) \wedge F'(\ve{p})=0\right\}.
\end{equation}
\end{defn}
\begin{proposition}\label{proPos:beschrParameter}
The sequence of parameter $(p_{\mu, k})_{\mu \in \N_d, k\in \N}$
from the ALS algorithm is bounded.
\end{proposition}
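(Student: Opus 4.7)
The plan is to combine two ingredients: a uniform upper bound on the product $\prod_{\nu=1}^d \|p_\nu^k\|^2 = \|v_k\|^2$, and slotwise monotonicity $\|p_\mu^{k+1}\| \geq \|p_\mu^k\|$ across the sweeps. Together with the positivity of the initial norms $\|p_\mu^1\| > 0$, these will force each factor $\|p_\mu^k\|$ to remain bounded.

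First I would apply Lemma \ref{lemma:functionValues2} to the iterates produced by the algorithm, obtaining
\begin{equation*}
f(v_{k,\mu}) \;=\; -\frac{1}{2}\,\frac{\|v_{k,\mu}\|^2}{\|b\|^2}.
\end{equation*}
Since (\ref{equ:deff}) gives $f \geq -\tfrac{1}{2}$, this implies $\|v_{k,\mu}\|^2 \leq \|b\|^2$ for every $k$ and $\mu$. In particular
\begin{equation*}
\prod_{\nu=1}^d \|p_\nu^k\|^2 \;=\; \|v_k\|^2 \;\leq\; \|b\|^2 \qquad (k \in \N).
\end{equation*}

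Second, the sequence $(f(v_{k,\mu}))$ is descending, as recorded in the remark following Corollary \ref{cor:functionValues}. Invoking the equivalence (a)$\Leftrightarrow$(c) of Corollary \ref{cor:equf} at every micro step turns this into $\|p_\mu^{k+1}\|^2 \geq \|p_\mu^k\|^2$ for all $\mu \in \N_d$ and $k \in \N$. Hence each of the $d$ scalar sequences $(\|p_\mu^k\|)_{k\in\N}$ is monotonically non-decreasing. Since the initialization in Algorithm \ref{alg:ALS} requires $v_1 = U(\ve{p}_1) \neq 0$, we have $\|p_\nu^1\| > 0$ for every $\nu$, and consequently $\|p_\nu^k\| \geq \|p_\nu^1\| > 0$ for all $\nu$ and $k$.

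Combining the two facts yields the explicit a priori bound
\begin{equation*}
\|p_\mu^k\|^2 \;=\; \frac{\|v_k\|^2}{\prod_{\nu \neq \mu} \|p_\nu^k\|^2} \;\leq\; \frac{\|b\|^2}{\prod_{\nu \neq \mu} \|p_\nu^1\|^2},
\end{equation*}
which is finite and independent of $k$, giving the claimed boundedness. The main conceptual step, and arguably the only non-routine one, is the observation that Corollary \ref{cor:equf}(c) upgrades the already-established monotonicity of $f$ into componentwise monotonicity of the factor norms; once this is in hand, the product bound $\|v_k\|^2 \leq \|b\|^2$ and the positive initial norms close the argument by a single division. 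The same reasoning applied to the intermediate states $v_{k,\mu}$ additionally bounds the micro-iterates produced inside the for-loop of Algorithm \ref{alg:ALS}.
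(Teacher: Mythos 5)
Your proposal is correct and follows essentially the same route as the paper's proof: bound $\|v_{k,\mu}\| \leq \|b\|$ via Lemma \ref{lemma:functionValues2} and $f \geq -\tfrac{1}{2}$, then use Corollary \ref{cor:equf} to get monotonicity of each factor norm and conclude boundedness of every $(\|p_\mu^k\|)_{k\in\N}$. Your explicit division by the (positive) initial norms merely spells out the final step that the paper leaves implicit.
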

\begin{proof}
From the definition of $f$ and Lemma \ref{lemma:functionValues2} it
follows that
\begin{equation*}
    -\frac{1}{2} \leq f(v_{k, \mu}) = -\frac{1}{2} \frac{\|v_{k,
    \mu}\|^2}{\|b\|^2} \quad \Leftrightarrow \quad\|v_{k, \mu}\| \leq
    \|b\|,
\end{equation*}
i.e. the sequence $(\|v_{\mu, k}\|)_{\mu \in \N_d, k\in \N} \subset
\Bild(U)$ is bounded. The sequence $(\|v_{\mu, k}\|)_{\mu \in \N_d,
k\in \N}$ is the product of the following $d$ sequences
$(\|p_\mu^k\|)_{k \in \N} \subset \R^{n_\mu}$. According to
Corollary \ref{cor:equf} the sequences $(\|p_\mu^k\|)_{k \in \N}$
are monotonically increasing. Since the product $\|v_{\mu, k}\|$ is
bounded and all sequences $(\|p_\mu^k\|)_{k \in \N}$ are
monotonically increasing, it follows that all $(p_\mu^k)_{k \in \N}$
are bounded. This means the sequence $(p_{\mu, k})_{\mu \in \N_d,
k\in \N}$ is bounded.
\end{proof}
The following statements are proofed in a corresponding article
about the convergence of alternating least squares optimisation in
general tensor format representations, please see \cite{ESHAKH13_1}
for more informations regarding the proofs.
\begin{lemma}[\cite{ESHAKH13_1}]\label{lemma:gradNull}
We have
\begin{equation*}
    \max_{0\leq \mu\leq L-1}\left\|F_\mu'(p_\mu^k)\right\| \xrightarrow[k \rightarrow \infty]{}0.
\end{equation*}
\end{lemma}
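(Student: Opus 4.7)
The plan is to express the partial gradient $F'_\mu(p_\mu^k)$ in closed form via the ALS update and then to control it using the already-established fact that the micro-step displacements vanish.

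First I would interpret $F_\mu$ as the restriction of $F$ to its $\mu$-th slot with the other components frozen at the iterate $\ve{p}_{k,\mu}=(p_1^{k+1},\dots,p_{\mu-1}^{k+1},p_\mu^k,p_{\mu+1}^k,\dots,p_d^k)$. Direct differentiation of $F$ in Eq.~(\ref{equ:defF}), together with Lemma \ref{lemma:ALSrecursion}, gives
\begin{equation*}
    \|b\|^2\,F'_\mu(p_\mu^k) \;=\; G_{k,\mu}\,p_\mu^k \;-\; \bigl(p_1^{k+1}\ten\cdots\ten p_{\mu-1}^{k+1}\ten \Id_{\R^{n_\mu}}\ten p_{\mu+1}^k\ten\cdots\ten p_d^k\bigr)^T b.
\end{equation*}
The subtracted vector is, by the ALS defining formula (\ref{eq:defPmu}), exactly $G_{k,\mu}\,p_\mu^{k+1}$. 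Consequently,
\begin{equation*}
    F'_\mu(p_\mu^k) \;=\; \frac{G_{k,\mu}}{\|b\|^2}\,\bigl(p_\mu^k-p_\mu^{k+1}\bigr).
\end{equation*}

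Next I would relate the norm of this gradient to the micro-step displacement in $\Vp$. Since $v_{k,\mu+1}-v_{k,\mu}$ is an elementary tensor that differs from $v_{k,\mu}$ only in its $\mu$-th factor, factoring out the norms of the unchanged factors yields $\|v_{k,\mu+1}-v_{k,\mu}\|=\sqrt{G_{k,\mu}}\,\|p_\mu^{k+1}-p_\mu^k\|$, so that
\begin{equation*}
    \|F'_\mu(p_\mu^k)\| \;=\; \frac{\sqrt{G_{k,\mu}}}{\|b\|^2}\,\|v_{k,\mu+1}-v_{k,\mu}\|.
\end{equation*}

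Finally, I would invoke Proposition \ref{proPos:beschrParameter}, which furnishes a uniform upper bound on every $\|p_\nu^k\|$ and hence on all factors $G_{k,\mu}$ simultaneously in $k$ and $\mu$. On the other hand, Lemma \ref{lemma:fmicro} combined with Corollary \ref{cor:functionValues} shows that $\|v_{k,\mu+1}-v_{k,\mu}\|^2 = 2\|b\|^2\bigl(f(v_{k,\mu})-f(v_{k,\mu+1})\bigr)$ tends to zero for every fixed $\mu$: the full drop $f(v_k)-f(v_{k+1})$ telescopes into a finite sum of such nonnegative terms, and since $(f(v_k))_k$ is Cauchy the sum, and hence each summand, vanishes. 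Multiplying a uniformly bounded factor by a vanishing one, and taking the maximum over the finite index set $\{0,\dots,L-1\}$, yields the claim. The only nontrivial step is the gradient identity in the first paragraph, which rests on Lemma \ref{lemma:ALSrecursion} and the characterisation (\ref{eq:defPmu}) of the ALS micro step as the exact minimiser in one factor; the rest is bookkeeping that transfers the already-known micro-step decay to the desired uniform gradient bound.
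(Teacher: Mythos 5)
The paper itself contains no proof of this lemma --- it is quoted from the companion article \cite{ESHAKH13_1} --- so there is no in-text argument to compare against; judged on its own, your proof is correct and self-contained within the tools this paper does provide. The key identity is right: the micro-step objective is quadratic in the $\mu$-th slot with Hessian $\frac{G_{k,\mu}}{\|b\|^2}\Id_{\R^{n_\mu}}$ and exact minimiser $p_\mu^{k+1}$ given by Eq.~(\ref{eq:defPmu}), so $F'_\mu(p_\mu^k)=\frac{G_{k,\mu}}{\|b\|^2}\bigl(p_\mu^k-p_\mu^{k+1}\bigr)$; the factorisation $\|v_{k,\mu+1}-v_{k,\mu}\|=\sqrt{G_{k,\mu}}\,\|p_\mu^{k+1}-p_\mu^k\|$ is valid because the two tensors differ only in the $\mu$-th factor; Lemma~\ref{lemma:Projection} and Lemma~\ref{lemma:fmicro} give $\|v_{k,\mu+1}-v_{k,\mu}\|^2=2\|b\|^2\bigl(f(v_{k,\mu})-f(v_{k,\mu+1})\bigr)\to 0$ via Corollary~\ref{cor:functionValues}, and Proposition~\ref{proPos:beschrParameter} bounds $G_{k,\mu}$ uniformly, so the maximum over the finitely many $\mu$ vanishes. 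This is essentially the mechanism of the cited general-format proof (gradient at the old point equals the micro-step Hessian applied to the displacement, controlled by the monotone decrease of $f$ and boundedness of the iterates), here simplified by the rank-one structure which makes that Hessian a positive scalar multiple of the identity.
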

\begin{corollary}[\cite{ESHAKH13_1}]\label{cor:gradF2Null}
Let $(\ve{p}_k)_{k\in \N}$ be the sequence from Algorithm
\ref{alg:ALS} and $F: P \rightarrow \R$ from Eq. (\ref{equ:defF}).
We have
\begin{equation*}
    \lim_{k\rightarrow \infty } F'(\ve{p}_k)=0.
\end{equation*}
\end{corollary}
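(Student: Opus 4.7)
The plan is to derive Corollary \ref{cor:gradF2Null} as an essentially immediate consequence of Lemma \ref{lemma:gradNull}. Since $F : P \to \R$ is defined on the Cartesian product $P = \R^{n_1} \times \cdots \times \R^{n_d}$, its full gradient $F'(\ve{p}_k)$ decomposes, in the natural Euclidean inner product on $P$, into $d$ block components, the $\mu$-th of which is precisely the partial gradient $F_\mu'(p_\mu^k) = \partial_\mu F(\ve{p}_k) \in \R^{n_\mu}$ that appears in Lemma \ref{lemma:gradNull}. This block-structure of the gradient is the only structural fact I need to invoke.

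Under this identification, the argument reduces to the elementary estimate
\begin{equation*}
    \|F'(\ve{p}_k)\|^2 \;=\; \sum_{\mu=1}^d \|F_\mu'(p_\mu^k)\|^2 \;\leq\; d \cdot \max_{1 \leq \mu \leq d-1} \|F_\mu'(p_\mu^k)\|^2,
\end{equation*}
followed by a direct appeal to Lemma \ref{lemma:gradNull}, which asserts exactly that the right-hand side tends to zero as $k \to \infty$. Taking square roots yields $\|F'(\ve{p}_k)\| \xrightarrow[k\to\infty]{} 0$, which is the claim.

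The only point that merits a moment of care is the convention under which $F_\mu'(p_\mu^k)$ in Lemma \ref{lemma:gradNull} is understood. If, following the companion paper \cite{ESHAKH13_1}, the partial gradient is evaluated at the full iterate $\ve{p}_k$ itself, the identification above is verbatim correct and nothing else is needed. If, however, $F_\mu'$ is evaluated at the intermediate sweep point $\ve{p}_{k,\mu}=(p_1^{k+1},\ldots,p_{\mu-1}^{k+1},p_\mu^k,\ldots,p_d^k)$, then I would additionally combine the boundedness of the parameter sequence (Proposition \ref{proPos:beschrParameter}), the fact that $F'$ is continuous (indeed polynomial) on bounded sets, and Lemma \ref{lemma:dist} giving $\|v_{k+1}-v_k\|\to 0$, to bridge the gap between $\ve{p}_{k,\mu}$ and $\ve{p}_k$. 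This transfer is the only step with any potential friction, but it is purely routine bookkeeping for a sweep-style algorithm and not a genuine obstacle.
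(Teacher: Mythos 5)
Your overall strategy is the right one, and it is evidently the intended one: this paper gives no internal proof of Corollary \ref{cor:gradF2Null} at all (it defers to \cite{ESHAKH13_1}), and the corollary is clearly meant to be read off from Lemma \ref{lemma:gradNull} by decomposing $F'(\ve{p}_k)$ into its $d$ partial-gradient blocks and bounding the Euclidean norm by $\sqrt{d}$ times the maximal block norm. Two caveats on the main display, though. First, the index range in Lemma \ref{lemma:gradNull} ($0\leq\mu\leq L-1$, with $L=d$) is a shifted enumeration of all $d$ micro steps; your bound should therefore read $\sum_{\mu=1}^d\|F_\mu'(p_\mu^k)\|^2\leq d\,\max_{\mu}\|F_\mu'(p_\mu^k)\|^2$ with the maximum over all $d$ blocks --- as written, with $\max_{1\leq\mu\leq d-1}$, the inequality is not valid. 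Second, and more importantly, the identification $F_\mu'(p_\mu^k)=\partial_\mu F(\ve{p}_k)$ is exactly the point in doubt: in the sweep convention of \cite{ESHAKH13_1}, $F_\mu'$ is the derivative of the one-block restricted functional, so $F_\mu'(p_\mu^k)$ is the partial gradient of $F$ at the intermediate point $\ve{p}_{k,\mu}=(p_1^{k+1},\dots,p_{\mu-1}^{k+1},p_\mu^k,\dots,p_d^k)$, not at $\ve{p}_k$. Under that reading your one-line proof does not apply verbatim, and the bridge you mention is genuinely needed, not optional bookkeeping.

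For that bridge your proposed tool is the wrong one: Lemma \ref{lemma:dist} controls $\|v_{k+1}-v_k\|$ for the \emph{represented tensors}, and closeness of $U(\ve{p}_{k+1})$ to $U(\ve{p}_k)$ does not imply closeness of the parameter blocks (the representation is non-unique under rescaling), which is what you need in order to compare $\partial_\mu F$ at $\ve{p}_{k,\mu}$ and at $\ve{p}_k$. The correct bridge is $\|p_\mu^{k+1}-p_\mu^k\|\to 0$, and it does follow from in-paper material: the $\mu$-th micro step minimises a quadratic in $q_\mu$ whose Hessian is $\frac{G_{k,\mu}}{\|b\|^2}\Id_{\R^{n_\mu}}$, so the micro-step decrease of Lemma \ref{lemma:fmicro} equals $\frac{G_{k,\mu}}{2\|b\|^2}\|p_\mu^{k+1}-p_\mu^k\|^2$; the decreases tend to zero by Corollary \ref{cor:functionValues}, while $G_{k,\mu}$ stays bounded away from zero because the block norms $\|p_\nu^k\|$ are monotonically increasing by Corollary \ref{cor:equf} (and nonzero initially, since $v_1\neq 0$). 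Combining this with Proposition \ref{proPos:beschrParameter} (boundedness of the parameters) and the continuity of the polynomial map $F'$ on bounded sets closes the gap. So: right reduction, correct self-diagnosis of the delicate point, but replace the appeal to Lemma \ref{lemma:dist} by the argument via Lemma \ref{lemma:fmicro} and Corollary \ref{cor:equf}, and fix the index range in the max.
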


\begin{theorem}[\cite{ESHAKH13_1}]\label{theo:AcuCrit}
Let $(v_k)_{k \in \N}$ be the sequence of represented tensors from
the ALS method. Every accumulation point of $(v_k)_{k \in \N}$ is a
critical point, i.e. $\Ap(v_k) \subseteq \mathfrak{M}$. Further, we
have
\begin{equation*}
    \dist{v_k}  {\mathfrak{M}} \xrightarrow[k \rightarrow \infty]{}0.
\end{equation*}
\end{theorem}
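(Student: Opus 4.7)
The plan is to prove the two assertions in sequence, relying on the boundedness established in Proposition \ref{proPos:beschrParameter} together with the gradient vanishing from Corollary \ref{cor:gradF2Null}.

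First, I would verify $\Ap(v_k)\subseteq\mathfrak{M}$. Let $v^\ast\in\Ap(v_k)$, so there exists a subsequence $(v_{k_j})_{j\in\N}$ with $v_{k_j}\to v^\ast$. By Proposition \ref{proPos:beschrParameter}, the corresponding parameter tuples $(\ve{p}_{k_j})_{j\in\N}\subset P$ are bounded; extracting a further subsequence (still indexed by $k_j$ for brevity) yields $\ve{p}_{k_j}\to\ve{p}^\ast\in P$. Since the multilinear map $U$ is continuous, we obtain $v^\ast=U(\ve{p}^\ast)$. Because $F=f\circ U$ is a polynomial, $F'$ is continuous, so Corollary \ref{cor:gradF2Null} gives
\begin{equation*}
F'(\ve{p}^\ast)=\lim_{j\to\infty}F'(\ve{p}_{k_j})=0.
\end{equation*}
Hence $v^\ast\in\mathfrak{M}$ by the definition in \refE{eq:criticalPoint}.

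Next, for $\dist{v_k}{\mathfrak{M}}\to 0$, I would argue by contradiction. Suppose there exist $\eps>0$ and a subsequence $(v_{k_j})_{j\in\N}$ with $\dist{v_{k_j}}{\mathfrak{M}}\geq\eps$ for all $j$. By Proposition \ref{proPos:beschrParameter}, $(\ve{p}_{k_j})_{j\in\N}$ is bounded, so after passing to a sub-subsequence $\ve{p}_{k_{j_l}}\to\ve{p}^{\ast\ast}$ and consequently $v_{k_{j_l}}\to U(\ve{p}^{\ast\ast})=:v^{\ast\ast}$. By the first part of the theorem, $v^{\ast\ast}\in\mathfrak{M}$, so $\dist{v_{k_{j_l}}}{\mathfrak{M}}\leq\|v_{k_{j_l}}-v^{\ast\ast}\|\to 0$, contradicting $\dist{v_{k_{j_l}}}{\mathfrak{M}}\geq\eps$.

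The main obstacle, and the only non-routine point, is ensuring that the limit parameter $\ve{p}^\ast$ obtained by compactness actually represents the limit tensor $v^\ast$ and satisfies $F'(\ve{p}^\ast)=0$. This is precisely where Proposition \ref{proPos:beschrParameter} does the essential work: without a bound on each individual factor $p_\mu^k$ (as opposed to just on the product $v_k=U(\ve{p}_k)$), one could not extract a convergent parameter subsequence, because the representation is only unique up to the scaling ambiguity noted in the introduction. Once boundedness is secured and the parameter limit is available, continuity of the polynomial maps $U$ and $F'$ finishes the argument, and the distance claim is a straightforward compactness consequence.
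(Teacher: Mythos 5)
Your argument is correct and is the natural one: boundedness of the parameter sequence (Proposition \ref{proPos:beschrParameter}) plus continuity of $U$ and $F'$ and the gradient statement of Corollary \ref{cor:gradF2Null} give $\Ap(v_k)\subseteq\mathfrak{M}$, and the claim $\dist{v_k}{\mathfrak{M}}\to 0$ follows by the standard subsequence--contradiction argument. Note that this paper itself gives no proof of the theorem but quotes it from \cite{ESHAKH13_1}, so there is nothing in the text to compare against; your compactness-and-continuity route matches the intended argument, and you correctly identify the boundedness of the individual factors $p_\mu^k$ (not merely of $v_k$) as the essential ingredient that makes the parameter-level limit available.
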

Let $\bar{v} \in \mathfrak{M}$ be a critical point and
$N:=\prod_{\mu=1}^d n_\mu \in \N$. Further, let $(\ve{p}_{k,
\mu})_{k\in\N, \mu \in \N_d} \subset P$ be the sequence of parameter
from the ALS algorithm and $R \in \R^{N \times N-1}$ be a matrix
with $R^T R = \Id_{\R^{N-1}}$ and $\Span(\bar{v})^\bot = \Bild(R)$,
i.e. the column vectors of $R$ build an orthonormal basis of the
linear space $\Span(\bar{v})^\bot$. Then the block matrix
\begin{equation}\label{equ:defOrthBasis}
    V:=\left[
         \begin{array}{cc}
         \ve{v} & R\\
         \end{array}
       \right]\in \R^{N \times N}, \quad  \left(\,\ve{v}:=\bar{v}/\|\bar{v}\| \,\right).
\end{equation}
is orthogonal, i.e. the columns of the matrix $V$ build an
orthonormal basis of the tensor space $\Vp$. The following matrix
$N_{k, \mu} \in \R^{N \times N}$ is imported in order to describe
the rate of convergence for the ALS method:
\begin{equation*}
    N_{k, \mu} := \Ten_{\nu=1}^{\mu-1} \Id \ten \left(\frac{1}{G_{k, \, \mu} G_{k,\, \mu-1}} M_{\mu,k}\,
M^T_{\mu,k} \right)\ten \Ten_{\nu=\mu+1}^d \Id,
\end{equation*}
where the matrix $\frac{1}{G_{k, \, \mu} G_{k,\, \mu-1}} M_{\mu,k}\,
M^T_{\mu,k}$ is from Corollary \ref{cor:ALSrecursion}. Further, it
follows from Corollary \ref{cor:ALSrecursion} that for the ALS micro
step the following equation:
\begin{equation}\label{equ:Recursion}
    v_{k, \mu+1} = N_{k,\mu} v_{k, \mu}
\end{equation}
holds. The tensor $v_{k,\mu}$ and the matrix $N_{k,\mu}$ are
represented with respect to the basis $V$, i.e
\begin{eqnarray*}
% \nonumber to remove numbering (before each equation)
  v_{k, \mu} &=& V V^Tv_{k, \mu}= \left[
         \begin{array}{cc}
         \ve{v} & R\\
         \end{array}
       \right]\left(
                \begin{array}{c}
                  \underbrace{\ve{v}^Tv_{k,\mu}}_{c_{k,\mu}:=} \\
                  \underbrace{R^T v_{k,\mu}}_{s_{k,\mu}:=} \\
                \end{array}
              \right)= \left[
         \begin{array}{cc}
         \ve{v}& R\\
         \end{array}
       \right]\left(
                \begin{array}{c}
                  c_{k,\mu} \\
                  s_{k,\mu} \\
                \end{array}
              \right)
\end{eqnarray*}
and
\begin{eqnarray*}
% \nonumber to remove numbering (before each equation)
  N_{k, \mu} &=& V \left(V^T N_{k, \mu} V \right)V^T = \left[
         \begin{array}{cc}
         \ve{v} & R\\
         \end{array}
       \right] \left[
       \begin{array}{cc}
       \ve{v}^T N_{k, \mu} \ve{v} &  \ve{v}^T N_{k, \mu} R\\
       R^T N_{k, \mu} \ve{v}&  R^T N_{k, \mu} R\\
       \end{array}\right]
  \left[
         \begin{array}{cc}
         \ve{v} & R\\
         \end{array}
       \right]^T.
\end{eqnarray*}
The recursion formula (\ref{equ:Recursion}) leads to the recursion
of the coefficient vector
\begin{eqnarray*}
% \nonumber to remove numbering (before each equation)
\left( \begin{array}{c}
        c_{k+1,\mu} \\
        s_{k+1,\mu} \\
        \end{array}
        \right) = \left[
       \begin{array}{cc}
       \ve{v}^T N_{k, \mu} \ve{v} &  \ve{v}^T N_{k, \mu} R\\
       R^T N_{k, \mu} \ve{v}&  R^T N_{k, \mu} R\\
       \end{array}\right]\left(
                \begin{array}{c}
                  c_{k,\mu} \\
                  s_{k,\mu} \\
                \end{array}
              \right)=\left(
                \begin{array}{c}
                  \ve{v}^T N_{k, \mu} \ve{v} \,\, c_{k,\mu} + \ve{v}^T N_{k, \mu} R \,\, s_{k,\mu}\\
                  R^T N_{k, \mu} \ve{v} \,\, c_{k,\mu} +  R^T N_{k, \mu} R \,\, s_{k,\mu} \\
                \end{array}
              \right).
\end{eqnarray*}
Without loss of generality we can assume that $\|s_{k, \mu}\| \neq
0$ and $|c_{k, \mu}| \neq 0$. Therefore, the following terms are
well defined:
\begin{eqnarray*}
% \nonumber to remove numbering (before each equation)
  q_{k,\mu}^{(s)} &:=&\frac{\left\|R^T N_{k, \mu} \ve{v} \,\, c_{k,\mu} +  R^T N_{k, \mu} R \,\, s_{k,\mu} \right\|}{\|s_{k,\mu}\|},  \\
  q_{k,\mu}^{(c)} &:=&\frac{\left| \ve{v}^T N_{k, \mu} \ve{v} \,\, c_{k,\mu} + \ve{v}^T N_{k, \mu} R \,\,
  s_{k,\mu}\right|}{|c_{k,\mu}|}.
\end{eqnarray*}
This preconsideration gives a recursion formula for the tangent of
the angle between $\bar{v}$ and $v_{k, \mu+1}$. We have
\begin{eqnarray*}
% \nonumber to remove numbering (before each equation)
  \tan^2\angle[\bar{v}, v_{k, \mu+1}]  &=& \frac{\dotprod{RR^T v_{k, \mu+1}}{v_{k, \mu+1}}}{\dotprod{\ve{v}\ve{v}^T v_{k, \mu+1}}{v_{k,
  \mu+1}}} = \frac{\|R^T v_{k, \mu+1}\|^2}{\left(\ve{v}^T v_{k,
  \mu+1}\right)^2}=\frac{\|s_{k, \mu+1}\|^2}{(c_{k, \mu+1})^2}=\frac{\left(q_{k,\mu}^{(s)}\right)^2}{\left(q_{k,\mu}^{(c)}\right)^2}\frac{\|s_{k, \mu}\|^2}{\left(c_{k,
  \mu}\right)^2}\\
  &=&\left(\frac{q_{k,\mu}^{(s)}}{q_{k,\mu}^{(c)}}\right)^2
  \frac{\|R^T v_{k, \mu}\|^2}{\left(\ve{v}^T v_{k,
  \mu}\right)^2} = \left(\frac{q_{k,\mu}^{(s)}}{q_{k,\mu}^{(c)}}\right)^2 \tan^2\angle[\bar{v}, v_{k, \mu}].
\end{eqnarray*}

\begin{remark}\label{rem:APnotEmpty}
Obviously, if the sequence of parameter $(\ve{p}_k)_{k \in
\N}\subset P$ is bounded, then the set of accumulation points of
$(\ve{p}_k)_{k \in \N}$ is not empty. Consequently, the set
$\Ap(v_k)$ is not empty, since the map $U$ is continuous.
\end{remark}
\begin{theorem}[\cite{ESHAKH13_1}]\label{theorem:isoAP}
If one accumulation point $\bar{v} \in \Ap(v_k) \subseteq
\mathfrak{M}$ is isolated, then we have
\begin{equation*}
    v_k \xrightarrow[k \rightarrow \infty]{}\bar{v}.
\end{equation*}
Furthermore, we have for the rate of convergence of an ALS micro
step
\begin{equation*}
    \left|\tan\angle[\bar{v}, v_{k, \mu+1}]\right| \leq q_\mu \left|\tan\angle[\bar{v}, v_{k, \mu}]
    \right|,
\end{equation*}
where
\begin{equation*}
    q_\mu:= \limsup_{k \rightarrow
    \infty}\left| \frac{q_{k,\mu}^{(s)}}{q_{k,\mu}^{(c)}}\right|.
\end{equation*}
If $q_\mu=0$, then the sequence $\left( \left|\tan\angle[\bar{v},
v_{k, \mu}]\right|\right)_{k \in \N}$ converges Q- superlinearly. If
$q_\mu <1$, then the sequence $\left( \left|\tan\angle[\bar{v},
v_{k, \mu}]\right|\right)_{k \in \N}$ converges at least Q-
linearly. If $q_\mu\geq 1$, then the sequence $\left(
\left|\tan\angle[\bar{v}, v_{k, \mu}]\right|\right)_{k \in \N}$
converges not Q-linearly.
\end{theorem}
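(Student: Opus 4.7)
My approach splits naturally into two independent parts: first establishing that the full sequence $(v_k)$ converges to $\bar{v}$, then reading off the micro-step rate from the recursion already derived in the preamble to the theorem.

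\textbf{Full convergence.} The plan is to combine three ingredients: the isolation of $\bar{v}$ in $\Ap(v_k)$, the vanishing-increment property $\|v_{k+1}-v_k\|\to 0$ from Lemma~\ref{lemma:dist}, and the boundedness of $(v_k)$ (which follows from Proposition~\ref{proPos:beschrParameter} and continuity of $U$). First I would fix $\varepsilon > 0$ so small that the closed ball $\overline{B(\bar{v}, 2\varepsilon)}$ contains no accumulation point of $(v_k)$ other than $\bar{v}$; isolation makes this possible. Next, by Lemma~\ref{lemma:dist} choose $K_0$ with $\|v_{k+1}-v_k\| < \varepsilon/2$ for every $k \geq K_0$. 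Suppose, for contradiction, that $v_k \not\to \bar{v}$. Because $\bar{v}$ is an accumulation point, $v_k \in B(\bar{v}, \varepsilon/2)$ for infinitely many $k$; yet by assumption $v_k \notin B(\bar{v}, \varepsilon)$ for infinitely many $k$ as well. The small-jump condition then forces infinitely many indices $k_j \geq K_0$ for which $v_{k_j}$ lies in the compact annular shell $\{ v : \varepsilon/2 \leq \|v - \bar{v}\| \leq 3\varepsilon/2\}$. By the Bolzano--Weierstrass theorem a subsequence $v_{k_{j_l}}$ converges to some $\tilde{v}$ in that shell; but then $\tilde{v}\in \Ap(v_k)\cap \overline{B(\bar{v},2\varepsilon)}$ with $\tilde{v}\neq \bar{v}$, contradicting the choice of $\varepsilon$.

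\textbf{Rate of convergence.} Here I would invoke the identity derived immediately before the theorem, namely
\begin{equation*}
\tan^2\angle[\bar{v}, v_{k, \mu+1}] = \left(\frac{q_{k,\mu}^{(s)}}{q_{k,\mu}^{(c)}}\right)^2 \tan^2\angle[\bar{v}, v_{k, \mu}],
\end{equation*}
which follows from the recursion $v_{k,\mu+1}=N_{k,\mu}v_{k,\mu}$ of Corollary~\ref{cor:ALSrecursion} expressed in the orthonormal basis $V=[\ve{v}\ R]$ adapted to $\bar{v}$. Taking the limit superior in $k$ on both sides and using the definition $q_\mu := \limsup_{k\to\infty} |q_{k,\mu}^{(s)}/q_{k,\mu}^{(c)}|$ yields the asymptotic estimate $|\tan\angle[\bar{v}, v_{k,\mu+1}]| \leq q_\mu \,|\tan\angle[\bar{v}, v_{k,\mu}]|$ asserted by the theorem. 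The three classifications then follow directly from the definition of $Q$-convergence applied to $(|\tan\angle[\bar{v}, v_{k, \mu}]|)_{k\in\N}$: $q_\mu=0$ means the ratio of successive terms tends to zero ($Q$-superlinear), $q_\mu<1$ means it is asymptotically bounded by a constant strictly less than one ($Q$-linear), and $q_\mu\geq 1$ rules out $Q$-linearity.

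\textbf{Main obstacle.} The substantive step is the first one: upgrading the existence of an isolated accumulation point to convergence of the full sequence. Without Lemma~\ref{lemma:dist} one would only be able to extract a converging subsequence, and the sequence could in principle oscillate between $\bar{v}$ and some distant accumulation point indefinitely. The annulus--compactness argument is routine but must be carried out with care: one needs the small-jump bound $\varepsilon/2$ to be strictly smaller than the shell width so that the sequence cannot bypass the shell in a single step, which is the mechanism that manufactures a second accumulation point near $\bar{v}$ and so contradicts isolation. Once convergence of $(v_k)$ is secured, the rate portion is essentially algebraic bookkeeping on top of the recursion already exhibited in the text.
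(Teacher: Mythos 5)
Your proposal is correct and matches the approach the paper sets up: the rate statement and its three-way classification come directly from the tangent recursion $\tan^2\angle[\bar{v},v_{k,\mu+1}]=\bigl(q_{k,\mu}^{(s)}/q_{k,\mu}^{(c)}\bigr)^2\tan^2\angle[\bar{v},v_{k,\mu}]$ derived in the preconsideration immediately before the theorem, while the full-sequence convergence is the standard isolated-accumulation-point argument built on Lemma~\ref{lemma:dist} and compactness (the paper itself defers the formal write-up to \cite{ESHAKH13_1}). One cosmetic repair: negating $v_k\to\bar{v}$ only gives some radius $\delta>0$ with infinitely many $v_k$ outside $B(\bar{v},\delta)$, so you should shrink $\varepsilon$ to be at most this $\delta$ (isolation survives shrinking) before running the annulus argument.
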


\begin{remark}
The calculation from Example \ref{exa:mike} shows that
\begin{equation*}
    \limsup_{k \rightarrow
    \infty}\left| \frac{q_{k,\mu}^{(s)}}{q_{k,\mu}^{(c)}}\right|=0 \quad \mbox{for all } \mu \in \N_d.
\end{equation*}
Hence, the ALS algorithm converges here Q-superlinearly.
Furthermore, in Example \ref{exa:aram} we showed for $\lambda <
\frac{1}{2}$
\begin{equation*}
\limsup_{k \rightarrow
    \infty}\left| \frac{q_{k,\mu}^{(s)}}{q_{k,\mu}^{(c)}}\right|=\frac{\lambda}{2} \left(3 \lambda + \lambda^2 + \sqrt{(3 \lambda + \lambda^2)^2 + 4 \lambda}\right) < 1 \quad \mbox{for all } \mu \in \N_d.
\end{equation*}
Hence, we have here Q-linear convergence.
\end{remark}

\begin{corollary}[\cite{ESHAKH13_1}]\label{cor:ALSisolatedconvergence}
If the set of critical points $\mathfrak{M}$ is
discrete,\footnote{In topology, a set which is made up only of
isolated points is called discrete.} then the sequence of
represented tensors $(v_k)_{k \in \N}$ from the ALS method is
convergent.
\end{corollary}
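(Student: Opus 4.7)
\emph{Plan.} The statement is essentially a direct consequence of what has already been established: boundedness of the iterates (Proposition~\ref{proPos:beschrParameter}), the location of accumulation points in $\mathfrak{M}$ (Theorem~\ref{theo:AcuCrit}), and the isolated-point convergence result (Theorem~\ref{theorem:isoAP}). I would therefore just verify that every accumulation point of $(v_k)_{k\in\N}$ meets the hypothesis of Theorem~\ref{theorem:isoAP}, and conclude.

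\emph{Step 1: a non-empty accumulation set.} First I would note that the parameter sequence $(\ve{p}_k)_{k\in\N}\subset P$ is bounded by Proposition~\ref{proPos:beschrParameter}. Since the multilinear map $U$ is continuous, the represented tensors $v_k = U(\ve{p}_k)$ form a bounded sequence in $\Vp$. By the reasoning in Remark~\ref{rem:APnotEmpty} this gives $\Ap(v_k)\neq\emptyset$, so I can fix some $\bar v\in\Ap(v_k)$.

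\emph{Step 2: isolation as a critical point.} Theorem~\ref{theo:AcuCrit} yields $\bar v\in\mathfrak{M}$. The assumption that $\mathfrak{M}$ is discrete means that every point of $\mathfrak{M}$ is isolated in $\mathfrak{M}$; in particular there is an open neighbourhood $B(\bar v)\subset\Vp$ with $B(\bar v)\cap\mathfrak{M}=\{\bar v\}$. This is exactly the hypothesis under which Theorem~\ref{theorem:isoAP} is formulated, so applying it gives $v_k\to\bar v$. As a by-product, $\Ap(v_k)=\{\bar v\}$, which shows that the choice of $\bar v$ in Step~1 was immaterial.

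\emph{Anticipated difficulty.} The only delicate point is matching the notion of ``isolated'' used in Theorem~\ref{theorem:isoAP} with isolation of $\bar v$ inside the discrete set $\mathfrak{M}$; the natural reading (isolation in $\mathfrak{M}$, equivalently in $\Ap(v_k)\subseteq\mathfrak{M}$) makes the implication immediate. Should a stronger notion be intended, the usual fallback would work: combine $\|v_{k+1}-v_k\|\to 0$ (Lemma~\ref{lemma:dist}) with $\dist{v_k}{\mathfrak{M}}\to 0$ (Theorem~\ref{theo:AcuCrit}) to conclude that the bounded sequence $(v_k)$ has a connected accumulation set, which in a discrete target set is forced to be a singleton. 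Either way, no new computation is required beyond what has already been assembled.
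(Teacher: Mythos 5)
Your argument is correct and is exactly the intended derivation: the paper defers the proof of this corollary to the companion article \cite{ESHAKH13_1}, and it follows, as you argue, by combining boundedness of the iterates (Proposition~\ref{proPos:beschrParameter} with Remark~\ref{rem:APnotEmpty}, so $\Ap(v_k)\neq\emptyset$), the inclusion $\Ap(v_k)\subseteq\mathfrak{M}$ (Theorem~\ref{theo:AcuCrit}), and Theorem~\ref{theorem:isoAP} applied to any accumulation point, which is isolated precisely because $\mathfrak{M}$ is discrete. Your Ostrowski-type fallback via Lemma~\ref{lemma:dist} and $\dist{v_k}{\mathfrak{M}}\to 0$ is also sound, but not needed.
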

In the following example it will be shown, that the ordering of the indices may play an important role for the convergence of ALS procedure.
\begin{remark}
Let $b = \Ten_{\mu=1}^3 b_{1\mu} + \lambda \Ten_{\mu=1}^3 b_{2\mu}$,
with $0 < \lambda < 1$, $\|b_{1\mu}\| = \|b_{2\mu}\| = 1$ and
$\dotprod{b_{1\mu}}{b_{2\mu}}=0$ for $\mu \in \N_{\leq d}$. Let
further $v^0 = C \Ten_{\mu=1}^d p_1^0$ for some $C \in \R$ and
\begin{equation}\label{eq:exampleform}
    p_\mu^0 = b_{1\mu} + \alpha_\mu b_{2\mu}
\end{equation}
for some $\alpha_\mu \in \R$. Assume after each ALS micro step the
parameters $p_\mu^k$ are rescaled to the form (\ref{eq:exampleform})
(obviously, a scaling of parameters has no effect on the future
behavior of the ALS method). After the first four micro steps one
gets
\begin{eqnarray*}
    p^1_1 = b_{11} + \lambda \alpha_2 \alpha_3 b_{21} \\
    p^1_2 = b_{12} + \lambda^2 \alpha_2 \alpha_3^2 b_{22} \\
    p^1_3 = b_{13} + \lambda^4 \alpha_2^2 \alpha_3^3 b_{23} \\
    p^2_1 = b_{11} + \lambda^7 \alpha_2^3 \alpha_3^5 b_{21}
\end{eqnarray*}
So for $v_1^2 := p^2_1 \ten p^1_2 \ten p^1_3$ one gets
\begin{equation*}
     v_1^2 = \hat{C} (b_{11} + \lambda^7 \alpha_2^3 \alpha_3^5 b_{21}) \ten (b_{13} + \lambda^2 \alpha_2 \alpha_3^2 b_{23}) \ten (b_{12} + \lambda^4 \alpha_2^2 \alpha_3^3 b_{22})
\end{equation*}
with some $\hat{C} \in \R$. Now assume the order of the directions
for ALS optimization is changed from $(1, 2, 3)$ to $(1, 3, 2)$,
i.e. after optimizing the first component $p^1_1$ we optimize the
third one (i.e. $p^1_3$) and only then the second one (i.e.
$p^1_2$). The same number of micro steps will result in a tensor
\begin{equation*}
    \tilde{v_1^2} = \tilde{C} (b_{11} + \lambda^7 \alpha_2^5 \alpha_3^2 b_{21}) \ten (b_{13} + \lambda^4 \alpha_2^3 \alpha_3^2 b_{23}) \ten (b_{12} + \lambda^2 \alpha_2^2 \alpha_3 b_{22})
\end{equation*}
with some $\tilde{C} \in \R$. Now if $\alpha_2$ and $\alpha_3$ satisfy
\begin{eqnarray*}
    \alpha_2 \geq 1 \geq \alpha_3, \\
    \alpha_2^3 \alpha_3^2 \geq \frac{1}{\lambda^5} \geq \alpha_2^2 \alpha_3^3,
\end{eqnarray*}
then it is not difficult to check, that $v_1^2$ satisfies the
dominance condition from Eq. (\ref{eq:dominates}) for $j = 1$,
whereas $\tilde{v_1^2}$ satisfies the dominance condition for $j =
2$. Thus, with the same starting point $v^0$ ALS iteration will
converge to the global minimum $\Ten_{\mu=1}^d b_{1\mu}$ for one
ordering of the indices and to local minimum $\lambda \Ten_{\mu=1}^d
b_{2\mu}$ for another ordering. Note that $v_0$ did not fulfil the
dominance conditions, but depending on the ordering of the ALS micro
steps $v_0$ leads to a dominance condition for different terms.
\end{remark}

\section{Numerical Experiments}
In this subsection, we observe the convergence behavior of the ALS
method by using data from interesting examples and more importantly
from real applications. In all cases, we focus particularly on the
convergence rate.

\subsection{Example 1}
We consider an example introduced by Mohlenkamp in \cite[Section
4.3.5]{Mohlenkamp2013}. Here we have
\begin{equation*}
    b = 2 \underbrace{\underbrace{\left(
            \begin{array}{c}
              1 \\
              0 \\
            \end{array}
          \right)}_{e_1:=} \ten \left(
            \begin{array}{c}
              1 \\
              0 \\
            \end{array}
          \right) \ten \left(
            \begin{array}{c}
              1 \\
              0 \\
            \end{array}
          \right)}_{b_1:=} + \underbrace{\underbrace{\left(
            \begin{array}{c}
              0 \\
              1 \\
            \end{array}
          \right)}_{e_2:=} \ten \left(
            \begin{array}{c}
              0 \\
              1 \\
            \end{array}
          \right) \ten \left(
            \begin{array}{c}
              0 \\
              1 \\
            \end{array}
          \right)}_{b_2:=},
\end{equation*}
see Eq. (\ref{equ:deff}). The tensor $b$ is orthogonally
decomposable. Although the example is rather simple, it is of
theoretical interest. Since the ALS method converges superlinear,
cf. the discussion in Section \ref{sec:introduction}. The tensor $b$
has only two terms, therefore the upper bound for convergence rate
from Eq. (\ref{eq:superlinAls}) is sharp, cf. Eq.
(\ref{eq:superlinAlsSharp}). Let $\tau\geq 0$, we define the initial
guess of the ALS algorithm by
\begin{equation*}
    v_0(\tau):= \left(
               \begin{array}{c}
                 \tau \\
                 1 \\
               \end{array}
             \right) \ten \left(
                            \begin{array}{c}
                              \tau \\
                              1 \\
                            \end{array}
                          \right) \ten \left(
                            \begin{array}{c}
                              \tau \\
                              1 \\
                            \end{array}
                          \right).
\end{equation*}
Since
\begin{equation*}
    4\dotprod{\left(\begin{array}{c}
                 1 \\
                 0 \\
               \end{array}
             \right)}{\left(\begin{array}{c}
                  \tau \\
                 1 \\
               \end{array}
             \right)}^2=4 \tau^2 \quad \mbox{and} \quad     \dotprod{\left(\begin{array}{c}
                 0 \\
                 1 \\
               \end{array}
             \right)}{\left(\begin{array}{c}
                 \tau \\
                 1 \\
               \end{array}
             \right)}^2=1,
\end{equation*}
we have for $\tau <\frac{1}{2}$ that the initial guess $v_0(\tau)$
dominates at $b_2$. Therefore, the ALS iteration converge to $b_2$.
If $\tau > \frac{1}{2}$, then  $v_0(\tau)$ dominates at $b_1$ and
the sequence from the ALS method will converges to $b_1$. In the
first test the tangents of the angle between the current iteration
point and the corresponding parameter of the dominate term $b_l$
($1\leq l \leq 2$) is plotted, i.e.
\begin{eqnarray}\label{eq:defTan}
% \nonumber to remove numbering (before each equation)
  \tan \phi_{k, l}
  &=&\sqrt{\frac{1-\cos^2{\phi_{k,l}}}{\cos^2{\phi_{k,l}}}},
\end{eqnarray}
where $\cos{\phi_{k,l}} = \frac{\dotprod{p_1^k}{e_l}}{\|p_1^k\|}$.
To illustrate the superlinear convergence of the ALS method, we
present further plots for the quotient
\begin{equation}\label{eq:defAlpha}
    q_{k, l}:=\frac{\tan{\phi_{k+1,l}}}{\tan{\phi_{k,l}}}.
\end{equation}
\begin{figure}[h]
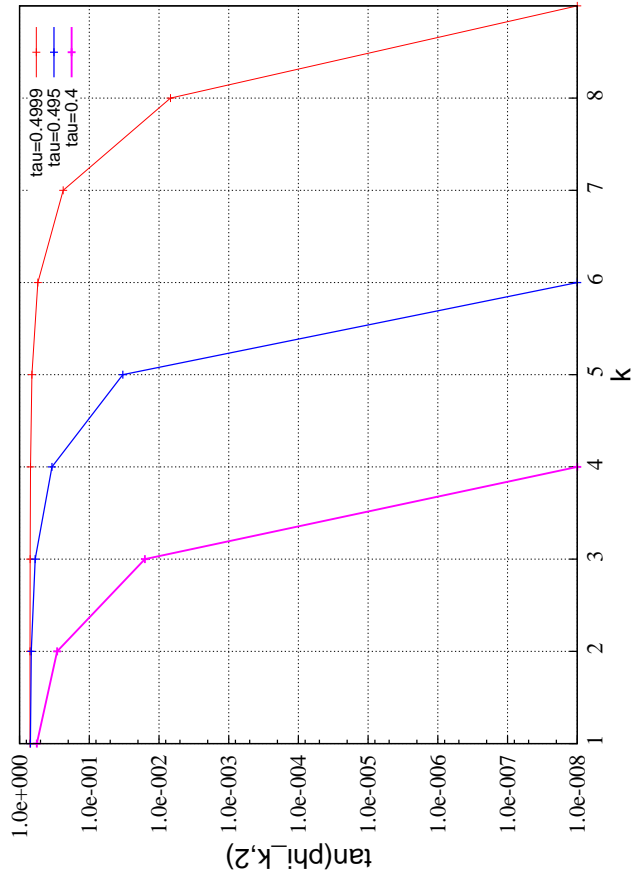

  \centering
  {\begin{turn}{0}  \image{b}{0.5\textwidth} \end{turn}}
  \caption{The tangents $\tan \phi_{k, 2}$ from Eq. (\ref{eq:defTan}) is plotted for $\tau \in \{0.4, \, 0.495, \,0.4999\}$.}
  \label{bild:tan}
\end{figure}
\begin{figure}[h]
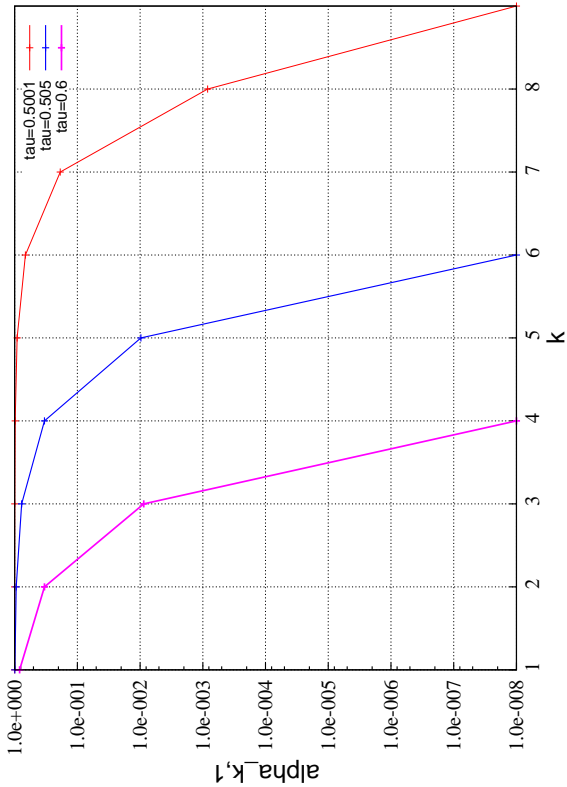
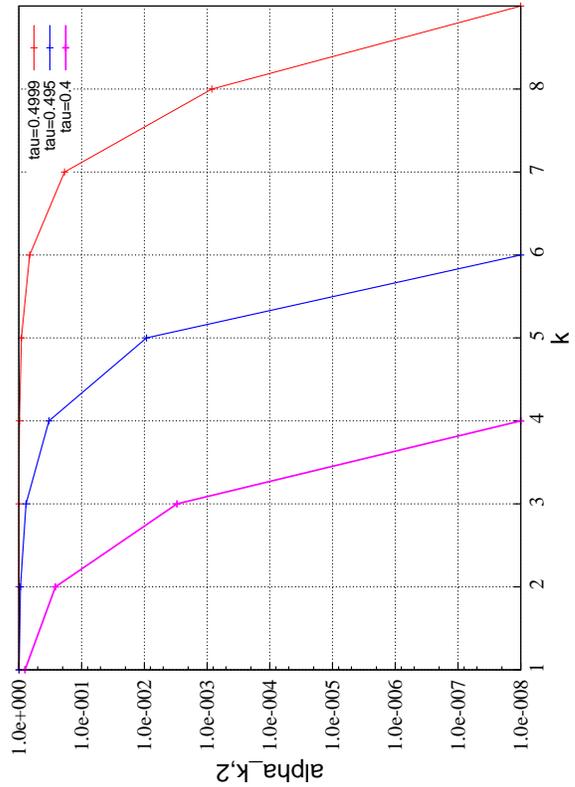

  \centering
  \subfigure[$q_{k, 1}$ is plotted for $\tau\in \{0.5001, \,0.505, \, 0.6\}$. Here the term $b_1$ dominates at every iteration point.]{{\begin{turn}{0}  \image{b1}{0.45\textwidth} \end{turn}}}\label{bild:b1}\hfill
  \subfigure[$q_{k, 2}$ is plotted for $\tau\in \{0.4999, \,0.495, \, 0.4\}$. Here the term $b_2$ dominates at every iteration point.]{{\begin{turn}{0}  \image{b2}{0.45\textwidth} \end{turn}}}\label{bild:b2}
  \caption{$q_{k, l}$ from Eq. (\ref{eq:defAlpha}) is plotted for $l \in \{1, 2\}$ and different values for $\tau$.}
\end{figure}

\subsection{Example 2}
Most algorithms in ab initio electronic structure theory compute
quantities in terms of one- and two-electron integrals. In
\cite{esbe2011} we considered the low-rank approximation of the
two-electron integrals. In order to demonstrate the convergence of
the ALS method on an example of practical interest, we use the order
$4$ tensor for the two-electron integrals of the so called AO basis
for the CH$_4$ molecule. We refer the reader to \cite{esbe2011} for
a detailed description our example. In this example the ALS method
converges Q-linearly, see Figure \ref{bild:CH4}.

\begin{figure}[h]
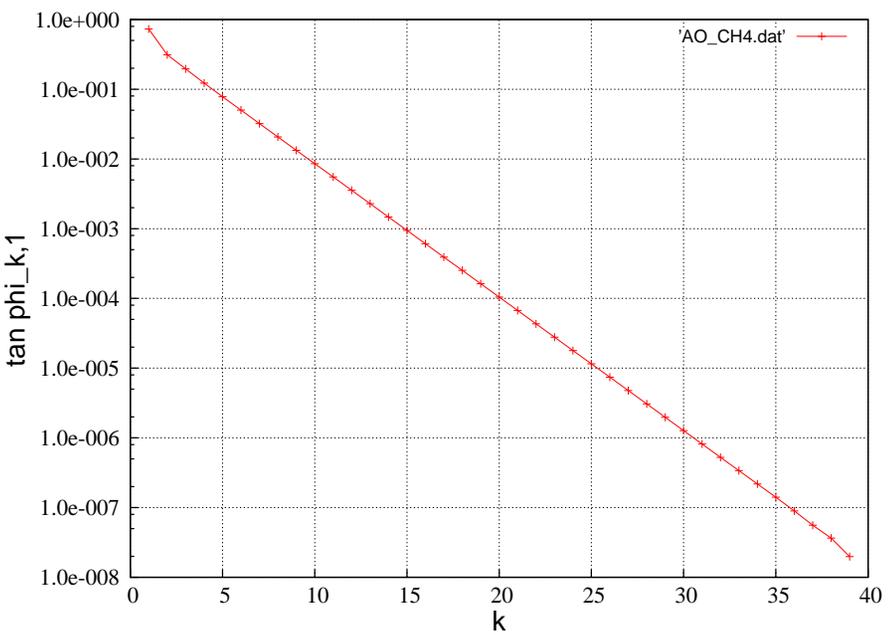

  \centering
  {\begin{turn}{0}  \image{AO_CH4}{0.5\textwidth} \end{turn}}
  \caption{The approximation of two-electron integrals for methane is considered. The tangents of the angle between the current iteration point and the
limit point with respect to the iteration number is plotted.}
  \label{bild:CH4}
\end{figure}

\subsection{Example 3}
We consider the tensor
\begin{equation*}
    b_\lambda = \Ten_{\mu=1}^3 p + \lambda \left(p \ten q \ten q + q \ten p \ten q + q \ten q \ten p \right)
\end{equation*}
from Ex. \ref{exa:aram}. The vectors $p$ and $q$ are arbitrarily
generated orthogonal vectors with norm $1$. The values of $\tan
(\phi_k^1)$ are plotted, where $\phi_k^1$ is the angle between
$p_k^1$ and the limit point $p$ (i.e. $\tan \phi_k^1 =
\frac{\dotprod{p_k^1}{q}}{\dotprod{p_k^1}{p}}$, for $k \geq 2$). For
the case $\lambda = 0.5$ the convergence is sublinearly, whereas for
$\lambda = 0.2$ it is Q-linearly.

\begin{figure}[h]
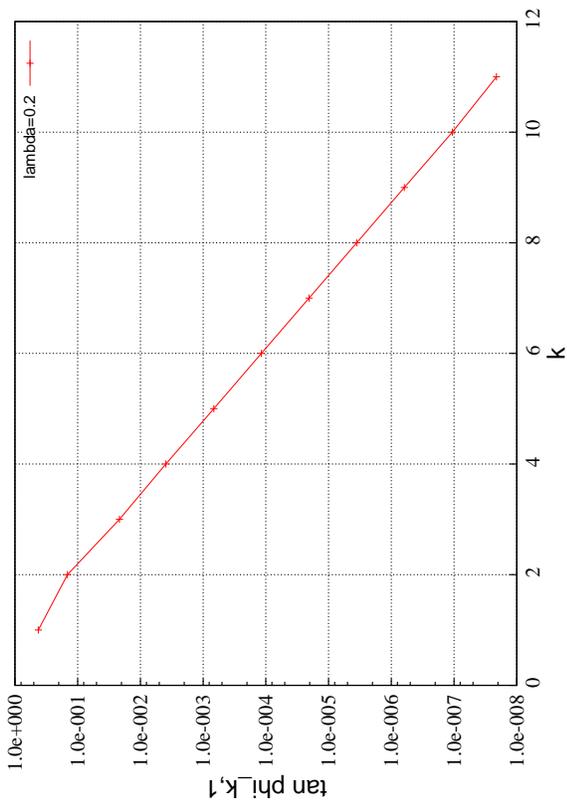
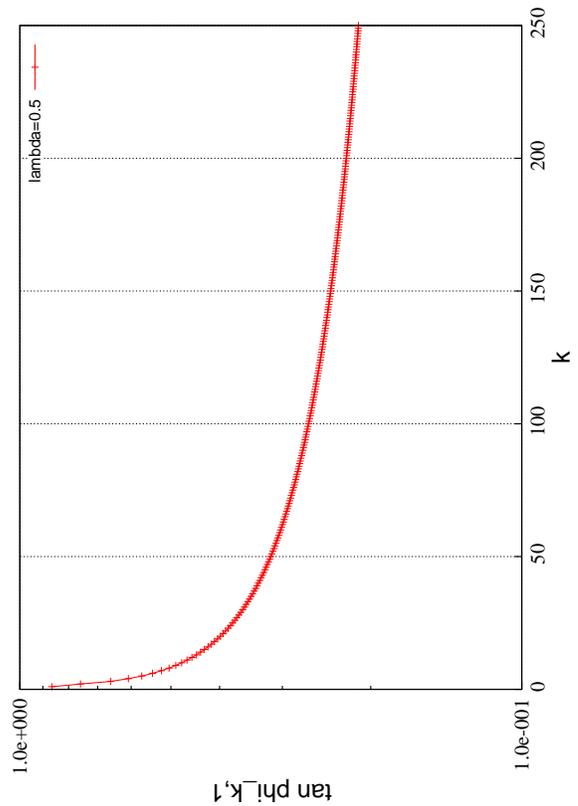

  \centering
  \subfigure[The tangents $\tan \phi_{k, 1}$ for $\lambda = 0.2$.]{{ \begin{turn}{0}  \image{data02}{0.45\textwidth} \end{turn}}}\label{bild:data02}\hfill
  \subfigure[The tangents $\tan \phi_{k, 1}$ for $\lambda = 0.5$.]{{ \begin{turn}{0}\image{data05}{0.45\textwidth}   \end{turn}}}\label{bild:data05}
  \caption{The approximation of $b$ from Example \ref{exa:aram} is considered. The tangents of the angle between the current iteration point and the
limit point with respect to the iteration number is plotted. For
$\lambda=1/2$, we have sublinear convergence. But for $\lambda = 0.2
< 1/2$ the sequence converges Q-linearly.}
\end{figure}
%

%\appendix{}

%\section{Appendix}
%

\bibliographystyle{plain}
\bibliography{all}

\end{document}